\theoremstyle{plain}
\newtheorem{theorem}{Theorem}[section]
\newtheorem{prop}[theorem]{Proposition}
\theoremstyle{definition}
\newtheorem{dfn}[theorem]{Definition}
\newtheorem{oss}[theorem]{Remark}
\newcommand{\ve}{\varepsilon}
\newcommand{\R}{\mathbb{R}}
\begin{document}

\title[$p-$Neumann Fractional Laplacians]{Neumann fractional $p-$Laplacian: eigenvalues and existence results}

\author[Dimitri Mugnai]{Dimitri Mugnai}
\author[Edoardo Proietti Lippi] {Edoardo Proietti Lippi}

\address[D. Mugnai]{Department of Ecology and Biology (DEB) \newline\indent
	 Tuscia University\newline \indent
	Largo dell'Universit\`a, 01100 Viterbo, Italy}
\email{dimitri.mugnai@unitus.it}

\address[E. Proietti Lippi]{Department of Mathematics and Computer Science \newline\indent University of Florence\newline\indent
Viale Morgagni 67/A, 50134 Firenze - Italy}
\email{edoardo.proiettilippi@unifi.it}

\maketitle
\begin{abstract}
We develop some properties of the $p-$Neumann derivative for the fractional $p-$Laplacian in bounded domains with general $p>1$. In particular, we prove the existence of a diverging sequence of eigenvalues and we introduce the evolution problem associated to such operators, studying the basic properties of solutions. Finally, we study a nonlinear problem with source in absence of the Ambrosetti-Rabinowitz condition.
\end{abstract}

Keywords: fractional $p-$Laplacian, Neumann boundary conditions, eigenvalues, subcritical perturbation.

2010AMS Subject Classification: 35A15, 47J30, 35S15, 47G10, 45G05.

\section{Introduction}

Consider a bounded domain $\Omega$ of $\R^N$, $N\geq1$, with Lipschitz boundary. The aim of this paper is to investigate problems of the form
\begin{equation}\label{pbmod}
\begin{cases}
(-\Delta)^s_p u =f(x,u) \quad $ in $ \Omega,
\\
\mathscr{N}_{s,p}u=g(x)  \quad \quad $ in $ \R^N \setminus \overline{\Omega},
\end{cases}
\end{equation}
where 
\begin{equation}\label{plap}
 (-\Delta)^s_p u(x)=C_{N,s,p} PV\int_{\R^N} |u(x)-u(y)|^{p-2}\frac{u(x)-u(y)}{|x-y|^{N+ps}}\,dy
\end{equation}
is the fractional $p$-Laplacian and
\begin{equation}\label{ns}
\mathscr{N}_{s,p}u(x) := C_{N,s,p} \int_\Omega |u(x)-u(y)|^{p-2}\frac{u(x)-u(y)}{|x-y|^{N+ps}}\,dy ,\quad x\in \R^N \setminus \overline{\Omega},
\end{equation}
is the {\em nonlocal normal $p-$derivative}, or  {\em $p-$Neumann boundary condition} and describes the natural Neumann boundary condition in presence of the fractional $p-$Laplacian. It extends the notion of nonlocal normal derivative introduced in \cite{DPROV} for the fractional Laplacian, i.e. for $p=2$. In our situation, $p>1$, $s\in(0,1)$ and $C_{N,s,p}$ is the constant appearing in the definition of the fractional $p-$Laplacian; however, for the sake of simplicity, from now on, we will set $C_{N,s,p}=1$.

The definition in \eqref{ns} was introduced in \cite{BMPS}, where basic integration by parts were given. Here, we present some further properties of the associated operator, following \cite{DPROV}, where a detailed description of the case $p=2$ was given. Indeed, we refer to \cite{DPROV} for several comments, justifications and reasons to consider such operators, and for this reason we shall skip these motivations; see also \cite{molicalibro} for a general overview on fractional operators.

We shall also face the parabolic problem associated to this new class of operators, namely
\[
\begin{cases}
u_t(x,t)+(-\Delta)^s_pu(x,t)=0 &\quad \text{in } \Omega, \quad t>0 \\
\mathscr{N}_{s,p}u(x,t)=0 &\quad \text{in } \R^N\setminus \overline{\Omega}, \quad t>0 \\
u(x,0)=u_0(x) &\quad \text{in } \Omega. 
\end{cases}
\]
In this case, we will prove conservation of the mass and monotony of the associated energy, as in \cite{DPROV}. Investigations on parabolic equations in presence of the fraction $p-$Laplacian have started in recent years, but only in presence of Dirichlet boundary conditions, and there are not many contributions, yet, see for instance \cite{AABP}, \cite{GT}, \cite{vazquez}, \cite{warmapar}. On the other hand, \cite{DPROV} is the first paper where linear parabolic problems with the associated boundary condition are considered, and, in this direction, we intend to introduce the nonlinear case with the associated nonlinear Neumann conditions. We recall that Neumann boundary problems for the $p-$Laplacian were already introduced in \cite{MRT}, but the underlying operator was different from ours, since in their integral definition of fractional Laplacian only points in $\Omega$ were taken into account; more important, their Neumann boundary condition is a pointwise one, like that of \cite{coelrowo}, \cite{coelrowo2}, \cite{coelrowo3}, \cite{mopeve} and \cite{stvo}.

After these preliminary, but natural, properties, we will consider problem \eqref{pbmod} first with a given source, just to treat the easy case. Then, we will study \eqref{pbmod} in presence of a general nonlinear term which doesn't satisfy the usual Ambrosetti-Rabinowitz condition, showing the existence of two solutions, one being positive in the whole of $\R^N$, and the other being negative.
\medskip

The paper is organized as follows. In Section 2 we consider the variational setting for the nonlocal elliptic problem
associated to the $p-$Neumann boundary condition, recalling some properties from \cite{BMPS} and proving a maximum principle. In addition, we prove that the $p-$Neumann boundary condition is also valid pointwise (see Theorem~\ref{boundary}).

In Section 3 we consider the associated eigenvalue problem. In particular, we prove the existence of an unbounded sequence of eigenvalues and we show that some classical properties of the set of eigenvalues for the $p-$Laplacian still hold true in this case. In particular, we show that any eigenfunction is bounded in the whole of $\R^N$.

In Section 4 we consider the associated parabolic problem and we show that, as in the classical case, the total mass is preserved and the energy is decreasing in time.

Finally, in Section 5, after treating the easy problem with an assigned source, we study a general problem where the right hand side function doesn't satisfy the Ambrosetti-Rabinowitz condition, and we show the the existence of two constant sign solutions by variational methods.

\section{Functional setting for the normal $p-$derivative}

In this section we follow the lines of \cite{DPROV}, introducing the functional setting and the basic properties of the fractional $p-$Laplacian with associated $p-$Neumann boundary conditions.

To do that, fix a bounded domain with Lipschitz boundary $\Omega\subset \R^N$,  $N\geq1$, and for $u:\R^N \to \R$ measurable, set
\[
\|u\|_X:=\left(\|u\|_{L^p(\Omega)}^p+  \| |g|^\frac{1}{p}u \|^p_{L^p(\R^N \setminus \Omega)}+
 \int_{\R^{2N}\setminus(C\Omega)^2}\frac{|u(x)-u(y)|^p}{|x-y|^{N+ps}}\,dxdy \right)^\frac{1}{p},
\]
where $C\Omega=\R^N\setminus \Omega$, and
$$ X:=\lbrace u:\R^N \to \R\quad \text{measurable such that }\|u\|_X<\infty \rbrace .$$

\begin{oss}
It is clear that, $\Omega$ being ``nice enough'', in the previous setting we can equally write $\R^N\setminus \Omega$ in place of $\R^N\setminus \overline\Omega$. The abstract setting can be faced also for $\Omega$ less regular, replacing $\||g|^{\frac{1}{p}}u\|_{L^p(\R^N\setminus \Omega)}$ with $\||g|^{\frac{1}{p}}u\|_{L^p(\R^N\setminus \overline \Omega)}$, which is the natural norm in the general framework.\end{oss}

Though already stated in \cite{BMPS}, we recall the following result, giving a detailed proof.
\begin{prop}
$X$ is a reflexive Banach space with norm $\|\cdot\|_X $.
\end{prop}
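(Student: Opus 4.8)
The plan is to verify the two assertions—completeness and reflexivity—separately, with the norm structure and an identification with a concrete $L^p$-type space doing most of the work.

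First I would observe that $\|\cdot\|_X$ is the natural norm associated with an embedding of $X$ into a product of $L^p$ spaces. Concretely, consider the map $T\colon X \to L^p(\Omega)\times L^p(\R^N\setminus\Omega;\,|g|\,dx)\times L^p(\R^{2N}\setminus(C\Omega)^2;\,|x-y|^{-(N+ps)}\,dx\,dy)$ defined by $Tu=\bigl(u|_\Omega,\ u|_{\R^N\setminus\Omega},\ (x,y)\mapsto u(x)-u(y)\bigr)$. By construction $\|u\|_X=\|Tu\|$ where the target carries the $\ell^p$-combination of the three component norms, so $T$ is a linear isometry onto its image. Since each factor is an $L^p$ space over a $\sigma$-finite measure with $1<p<\infty$, the product is reflexive; hence it suffices to show that the image $T(X)$ is a \emph{closed} subspace, for a closed subspace of a reflexive space is reflexive and a closed subspace of a Banach space is complete, and then $X$ inherits both properties through the isometry $T$ (one must also check $T$ is injective, which is immediate: $Tu=0$ forces $u=0$ a.e.\ on $\Omega$ and on $\R^N\setminus\Omega$, hence a.e.\ on $\R^N$).

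So the crux is closedness of $T(X)$, equivalently: if $u_n$ is Cauchy in $\|\cdot\|_X$, then its limit data come from an actual function $u\in X$. Given a Cauchy sequence $(u_n)$, the sequences $u_n|_\Omega$ and $u_n|_{\R^N\setminus\Omega}$ converge in the respective $L^p$ spaces to functions $v$ on $\Omega$ and $w$ on $\R^N\setminus\Omega$ (for the second, convergence in $L^p(|g|\,dx)$ gives a limit on the set $\{g\neq 0\}$; one fixes the values on $\{g=0\}$ using the third component, see below). Define $u:=v$ on $\Omega$ and $u:=w$ on $\R^N\setminus\Omega$. Passing to a subsequence we may assume all convergences are also pointwise a.e. The third component $u_n(x)-u_n(y)$ is Cauchy in the weighted $L^p$ over $\R^{2N}\setminus(C\Omega)^2$; pointwise a.e.\ it converges to $u(x)-u(y)$ wherever $u$ is already defined, i.e.\ on $(\Omega\times\R^N)\cup(\R^N\times\Omega)$, which is exactly $\R^{2N}\setminus(C\Omega)^2$ up to the diagonal issues, so by uniqueness of $L^p$ limits the limiting Gagliardo-type term equals the one built from $u$, and Fatou gives $\|u\|_X\le\liminf\|u_n\|_X<\infty$, so $u\in X$ and $u_n\to u$ in $X$. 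For the values of $w$ on $\{g=0\}$, note these are irrelevant to the norm except through the double integral, and the double-integral part of the Cauchy condition pins them down a.e.\ on the complement of $\Omega$ as well (any ambiguity lies on a set not seen by $\|\cdot\|_X$, and one simply selects a representative). This shows $T(X)$ is closed.

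The main obstacle I anticipate is the bookkeeping around the set $\{g=0\}$ and the diagonal of $\R^{2N}$: outside $\{g\neq0\}$ the second term of the norm gives no control, and one must argue carefully that the double-integral term nonetheless forces $u_n|_{\R^N\setminus\Omega}$ to converge in a pointwise-a.e.\ sense on that bad set, or else that the limit function can be chosen freely there without affecting membership in $X$ or the value of the limit in the $X$-topology. Once this is handled, everything else—linearity and isometry of $T$, reflexivity of $L^p$ products for $1<p<\infty$, the standard fact that closed subspaces of reflexive Banach spaces are reflexive (James / Eberlein–Šmulian, or simply that the bidual of a closed subspace embeds in the bidual of the space)—is routine and can be cited.
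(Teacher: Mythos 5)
Your overall strategy is sound and is in fact the standard one: the isometric embedding $u\mapsto\bigl(u|_\Omega,\,u|_{C\Omega},\,(x,y)\mapsto u(x)-u(y)\bigr)$ into a product of $L^p$ spaces is exactly how reflexivity is obtained in the references the paper leans on, and the third component of your map $T$ is the same object ($T_U$) that the paper uses in its direct completeness proof. Framing both completeness and reflexivity as ``$T(X)$ is closed in a reflexive product'' is economical and would, once complete, prove slightly more than the paper writes out (the paper proves completeness by hand and cites \cite{BMPS} for reflexivity).

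However, there is a genuine gap at the step you yourself flag, and one of the two escape routes you offer is wrong. The problem is the construction of the limit function $u$ on $C\Omega\cap\{g=0\}$ (in the homogeneous case $g\equiv 0$ this is all of $C\Omega$, so it is not a marginal issue). You cannot ``choose the representative freely'' there: the Gagliardo term is integrated over $\R^{2N}\setminus(C\Omega)^2\supseteq \Omega\times C\Omega$, so the values of $u$ on $C\Omega$ are seen by $\|\cdot\|_X$ even where $g$ vanishes, and an arbitrary choice would in general not reproduce the $L^p$-limit of the third component. (The same issue makes your injectivity check incomplete: $Tu=0$ gives $u=0$ on $\{g\neq0\}$ only; to get $u=0$ on all of $C\Omega$ you must use the third component with $y\in\Omega$, as the paper does.) Your other alternative is the correct one, but it is precisely the missing idea, not a routine verification: from $L^p(\R^{2N})$-convergence of $T_{u_n}$ one must extract pointwise a.e.\ convergence of $u_n$ on $C\Omega$. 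The paper does this by a Fubini slicing argument: after passing to a subsequence so that $T_{u_n}\to T$ a.e.\ on $\R^{2N}$ and $u_n\to u$ a.e.\ on $\Omega$, Fubini guarantees a point $x_0\in\Omega$ for which simultaneously $u_n(x_0)$ converges and $T_{u_n}(x_0,y)$ converges for a.e.\ $y\in\R^N$; then $u_n(y)=u_n(x_0)-|x_0-y|^{N/p+s}T_{u_n}(x_0,y)$ converges for a.e.\ $y\in C\Omega$, which defines $u$ there and identifies the limit of the third component as $u(x)-u(y)$. Without this step your proof of closedness of $T(X)$ does not go through; with it, the rest of your argument (Fatou, reflexivity of the product, closed subspaces of reflexive spaces being reflexive) is indeed routine.
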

\begin{proof}
First, we show that $\|\cdot\|_X$ is a norm. If $\|u\|_X=0$, we have $\|u\|_{L^p(\Omega)}=0$, so $u=0$ a.e. in $\Omega$.
Moreover, we have
$$ \int_{\R^{2N}\setminus(C\Omega)^2}\frac{|u(x)-u(y)|^p}{|x-y|^{N+ps}}\,dxdy=0,$$
hence $|u(x)-u(y)|=0$ in $\R^{2N}\setminus(C\Omega)^2$. In particular, we can take $x\in C\Omega$ and $y\in \Omega$ to obtain
$$u(x)=u(x)-u(y)=0.$$
In this way, we have $u=0$ a.e. in $\R^N$.

Now, we prove that $X$ is complete, and to do this we take a Cauchy sequence $(u_k)_k$ in $X$.
In particular, $u_k$ is a Cauchy sequence in $L^p(\Omega)$ and so (up to a subsequence) there exists $u\in L^p(\Omega)$ such that $u_k$ converges to
$u$ in $L^p(\Omega)$ and a.e. in $\Omega$. This means that there exists $Z_1\subset \Omega$ such that
\begin{equation}\label{z1}
|Z_1|=0 \text{ and}\, u_k(x)\to u(x) \text{ for every}\, x\in \Omega \setminus Z_1.
\end{equation}
We also define for every $U:\R^N\to \R$ and $(x,y)\in \R^{2N}$
\[
T_U(x,y):=\frac{(U(x)-U(y))\chi_{\R^{2N}\setminus(C\Omega)^2(x,y)}}{|x-y|^{N/p+s}},
\]
so
$$ T_{u_k}(x,y)-T_{u_h}(x,y)= 
\frac{(u_k(x)-u_h(x)-u_k(y)+u_h(y))\chi_{\R^{2N}\setminus(C\Omega)^2(x,y)}}{|x-y|^{N/p+s}}. $$
Since $u_k$ is a Cauchy sequence in $X$, for every $\varepsilon>0$ there exists $N_\varepsilon>0$ such that for 
$h,k\geq N_\varepsilon$ we have in particular
$$\varepsilon^p\geq \int_{\R^{2N}\setminus(C\Omega)^2}\frac{|u_k(x)-u_h(x)-u_k(y)+u_h(y)|^p}{|x-y|^{N+ps}}\,dxdy
=\|T_{u_k}-T_{u_h}\|_{L^p(\R^{2N})}^p.  $$
So, $T_{u_k}$ is a Cauchy sequence in $L^p(\R^{2N})$, and up to a subsequence we can assume that $T_{u_k}$
converges to some $T$ in $L^p(\R^{2N})$ and a.e. in $\R^{2N}$. This means that there exists $Z_2\subset \R^{2N}$ such that
\begin{equation}\label{z2}
|Z_2|=0 \text{ and}\, T_{u_k}(x,y)\to T_u(x,y) \text{ for every}\, (x,y)\in \R^{2N} \setminus Z_2.
\end{equation}
For any $x\in \Omega$, we set
$$S_x:=\lbrace y\in \R^N :\, (x,y)\in \R^{2N}\setminus Z_2  \rbrace, $$
$$W:=\lbrace (x,y)\in \R^{2N}:\, x\in \Omega \text{ and}\,\, y\in \R^N\setminus S_x  \rbrace, $$
$$V:=\lbrace x\in \Omega : \, |\R^N\setminus S_x|=0  \rbrace. $$
If we take $(x,y)\in W$, we have $y\in \R^N\setminus S_x$, so $(x,y) \notin \R^{2N}\setminus Z_2$ that is 
$(x,y)\in Z_2$. From this we get 
\begin{equation}\label{w}
W \subseteq Z_2.
\end{equation}
From \eqref{w} and \eqref{z2}, we obtain $|W|=0$, so by the Fubini's Theorem we have
$$0=|W|=\int_\Omega |\R^{N}\setminus S_x|\,dx, $$
which implies that $|\R^{N}\setminus S_x|=0$ a.e. $x\in \Omega$. It follows that $|\Omega\setminus V|=0$. This together 
with \eqref{z1} implies that
$$|\Omega\setminus (V\setminus Z_1)|=|(\Omega\setminus V)\cup Z_1|\leq|\Omega\setminus V|+|Z_1|=0.$$
In particular, $V\setminus Z_1 \neq \emptyset$ (nay, $|V\setminus Z_1|=|\Omega|$), so we can take $x_0\in V\setminus Z_1$. From \eqref{z1} we have
$$\lim_{k\to \infty} u_k(x_0)=u(x_0). $$
In addition, since $x_0\in V$, we get $|\R^{N}\setminus S_{x_0}|=0$. This means that for a.e. $y\in \R^N$, 
$(x_0,y)\in \R^{2N}\setminus Z_2$ and so 
$$\lim_{k\to \infty}T_{u_k}(x_0,y)=T(x_0,y). $$
Moreover, since $\Omega\times(C\Omega)\subseteq \R^{2N}\setminus(C\Omega)^2$, we have
$$T_{u_k}(x_0,y):= \frac{u_k(x_0)-u_k(y)}{|x_0-y|^{N/p+s}}$$
for a.e. $y\in C\Omega$. From this, we obtain
\begin{align*}
\lim_{k\to \infty}u_k(y)&=\lim_{k\to \infty}\left( u_k(x_0)-|x_0-y|^{N/p+s} T_{u_k}(x_0,y) \right) \\
&=u(x_0)-|x_0-y|^{N/p+s} T(x_0,y)
\end{align*}
for a.e. $y\in C\Omega$. This and \eqref{z1} imply that $u_k$ converges a.e. in $\R^N$, so we can say that $u_k$ converges a.e.
to some $u$ in $\R^N$. Now, since $u_k$ is a Cauchy sequence in $X$, for any $\varepsilon>0$ there exists $N_\varepsilon>0$
such that, for any $h\geq N_\varepsilon$,
\begin{align*}
\varepsilon^p&\geq \liminf_{k\to \infty}\|u_h-u_k\|_X^p  \\
&\geq \liminf_{k\to \infty}\int_{\Omega}|u_h-u_k|^p\,dx+ \liminf_{k\to \infty}\int_{\R^N\setminus \Omega}|g||u_h-u_k|^p\,dx\\
&\quad+ \liminf_{k\to \infty}\int_{\R^{2N}\setminus(C\Omega)^2}\frac{|(u_k-u_h)(x)-(u_k+u_h)(y)|^p}{|x-y|^{N+ps}}\,dxdy \\
&\geq \int_{\Omega}|u_h-u|^p\,dx+ \int_{\R^N\setminus \Omega}|g||u_h-u|^p\,dx\\
&+
\int_{\R^{2N}\setminus(C\Omega)^2}\frac{|(u_k-u)(x)-(u_k+u)(y)|^p}{|x-y|^{N+ps}}\,dxdy \\
&=\|u_h-u\|_X^p,
\end{align*}  
where we used Fatou's Lemma. So $u_h$ converges to $u$ in $X$. Starting this procedure with a generic subsequence, we can conclude that $X$ is complete.

As for the reflexivity, see \cite{BMPS}.
\end{proof}

\begin{oss}
From the definition of $X$, it follows that $X$ is embedded in $L^p(B(0,R))$ for every $R>0$. Indeed, by the convergence of the double integral, we get that for a.e. $x\in \Omega$
\[
\int_{\R^{N}}\frac{|u(x)-u(y)|^p}{|x-y|^{N+ps}}\,dy <\infty,
\]
and so for every $R>0$
\[
\frac{1}{R^{N+ps}}\int_{B(x,R)}|u(x)-u(y)|^p\,dy <\infty.
\]
In addition, we have
\[
\int_{B(x,R)}|u(y)|^p\,dy \leq 2^{p-1}\int_{B(x,R)}|u(x)-u(y)|^p\,dy+2^{p-1}|u(x)|^p|B(x,R)|<\infty,
\]
hence the claim follows.
\end{oss}

\begin{oss}\label{immer}
Under the previous setting, $X$ is embedded continuously in $W^{s,p}(\Omega)$. As a consequence, the standard compact embeddings in suitable $L^q(\Omega)$ spaces hold true, see \cite{DNPV}.
\end{oss}

Now, we recall the analogous of the divergence theorem and of  the integration by parts formula for the nonlocal case, see \cite{BMPS}:
\begin{prop}\label{parti1}
Let $u$ be any bounded $C^2$ function in $\R^N$. Then,
$$\int_\Omega (-\Delta)^s_p u\,dx = -\int_{\R^N \setminus \Omega}\mathscr{N}_{s,p}u\,dx. $$
\end{prop}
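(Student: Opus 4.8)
The plan is to prove the identity by a Fubini-type argument, writing both sides as integrals over the same region of $\R^{2N}$ and checking that the integrands agree up to sign. First I would unravel the definitions: by \eqref{plap} (with $C_{N,s,p}=1$) the left-hand side is
\[
\int_\Omega (-\Delta)^s_p u(x)\,dx = \int_\Omega \left( PV\int_{\R^N} |u(x)-u(y)|^{p-2}\frac{u(x)-u(y)}{|x-y|^{N+ps}}\,dy\right) dx,
\]
while by \eqref{ns} the right-hand side is
\[
-\int_{\R^N\setminus\Omega}\mathscr{N}_{s,p}u(x)\,dx = -\int_{\R^N\setminus\Omega}\left(\int_\Omega |u(x)-u(y)|^{p-2}\frac{u(x)-u(y)}{|x-y|^{N+ps}}\,dy\right) dx.
\]
Set $F(x,y):=|u(x)-u(y)|^{p-2}\dfrac{u(x)-u(y)}{|x-y|^{N+ps}}$, which is antisymmetric, $F(y,x)=-F(x,y)$. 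The key observation is that $\R^{2N}=(\Omega\times\Omega)\cup(\Omega\times C\Omega)\cup(C\Omega\times\Omega)\cup(C\Omega\times C\Omega)$, and the integral of $F$ over the symmetric pieces $\Omega\times\Omega$ and $C\Omega\times C\Omega$ vanishes by antisymmetry, so that
\[
\int_\Omega\int_{\R^N} F(x,y)\,dy\,dx = \int_\Omega\int_{C\Omega} F(x,y)\,dy\,dx = -\int_{C\Omega}\int_\Omega F(x,y)\,dy\,dx,
\]
which is exactly the claimed equality once the principal value is handled.

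The second step is to make the above rigorous, i.e. to justify the use of Fubini's theorem and the cancellation of the $PV$. Since $u\in C^2(\R^N)$ is bounded, a second-order Taylor expansion gives $|u(x)-u(y)|\le C|x-y|$ for $|x-y|\le1$ and the usual symmetric-difference estimate $|2u(x)-u(x+z)-u(x-z)|\le C|z|^2$ controls the singular part near the diagonal; combined with the bound $|u(x)-u(y)|^{p-1}\le (2\|u\|_\infty)^{p-1}$ for $|x-y|\ge1$, one checks that $(-\Delta)^s_p u$ is well-defined and bounded on the compact set $\overline\Omega$, so the left-hand side is a finite integral. The cleanest way to proceed is to write the principal value as $\lim_{\ve\to0}\int_{|x-y|>\ve} F(x,y)\,dy$ and note that on $\Omega\times C\Omega$ no excision is needed at all (there $x\ne y$ away from $\partial\Omega$; more precisely the set where $|x-y|<\ve$ with $x\in\Omega$, $y\in C\Omega$ shrinks and $F$ is integrable there by the Lipschitz bound on $u$), so that $\int_\Omega PV\int_{\R^N}F\,dy\,dx=\int_{\Omega\times C\Omega}F\,dx\,dy$ with an absolutely convergent integral. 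Then Fubini applies on $\Omega\times C\Omega$ directly, yielding $\int_{\Omega\times C\Omega}F = -\int_{C\Omega\times\Omega}F = -\int_{C\Omega}\mathscr{N}_{s,p}u$.

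I expect the main obstacle to be the careful treatment of the principal value together with the diagonal singularity near $\partial\Omega$: one must argue that the symmetric part $\int_{\Omega\times\Omega}F$ is zero despite the $PV$, which follows from antisymmetry after a symmetric excision $\{|x-y|>\ve\}$, and that restricting the inner integral of $(-\Delta)^s_p u(x)$ from $\R^N$ to $C\Omega$ changes it by exactly $\int_\Omega F(x,y)\,dy$ (no $PV$ lost, since when $x\in\Omega$ the portion of $\{|x-y|<\ve\}$ lying in $\Omega$ is symmetric about $x$ and its $F$-integral tends to $0$ as $\ve\to0$). Once these two cancellations are in place, the identity is immediate; alternatively, one may simply cite \cite{BMPS}, as the statement is attributed there, and limit the proof to recording the antisymmetry computation above. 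I would present the antisymmetry argument in full and relegate the regularity/integrability verifications to a remark, since they are routine given $u\in C^2\cap L^\infty$.
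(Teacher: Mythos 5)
Your proposal is correct in outline, and in fact the paper offers no proof of Proposition~\ref{parti1} at all: it merely recalls the statement from the reference [BMPS], where essentially your antisymmetry-plus-Fubini computation is carried out (and where the $p=2$ prototype appears in [DPROV]). So your argument is the expected one, and its two load-bearing steps are exactly the ones you isolate. For the record, the ``routine'' verifications go as follows. Absolute convergence of $\int_{\Omega\times C\Omega}|F|$ follows from $|u(x)-u(y)|^{p-1}\le L^{p-1}|x-y|^{p-1}$ for $|x-y|\le 1$ together with the boundedness of $u$ for $|x-y|\ge 1$: for a Lipschitz domain this reduces the near-diagonal part to $\int_\Omega d(x,\partial\Omega)^{\,p-1-ps}\,dx<\infty$, which holds because $ps-(p-1)=1-p(1-s)<1$; Fubini and the substitution $(x,y)\mapsto(y,x)$ then give $\int_{\Omega\times C\Omega}F=-\int_{C\Omega\times\Omega}F=-\int_{C\Omega}\Nsp u$. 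The vanishing of the $\Omega\times\Omega$ contribution is cleanest if you keep the excision at the level of the double integral, since $\int\int_{\{(x,y)\in\Omega^2:\,|x-y|>\ve\}}F=0$ exactly for every $\ve>0$; to commute this limit with the outer $dx$-integral you need the uniform-in-$\ve$ domination you allude to. The one place where your sketch is too optimistic is the claim that the estimate $|2u(x)-u(x+z)-u(x-z)|\le C|z|^2$ ``controls the singular part'': that is literally true only for $p=2$, where $J_2$ is linear. For general $p$ the second-order cancellation must be pushed through the nonlinearity, i.e.\ one must bound $J_p(u(x)-u(x+z))-J_p(u(x-z)-u(x))$; for $p\ge 2$ the inequality $|J_p(a)-J_p(b)|\le (p-1)\max(|a|,|b|)^{p-2}|a-b|$ yields $O(|z|^{p})$ and everything goes through, but for $1<p<2$ the H\"older bound $|J_p(a)-J_p(b)|\le C|a-b|^{p-1}$ only gives $O(|z|^{2(p-1)})$, which is integrable against $|z|^{-N-ps}$ only when $s<2(p-1)/p$. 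In the remaining range the pointwise well-definedness of $(-\Delta)^s_p u$ for merely $C^2$ functions is a known delicate point (it is glossed over by the paper as well); if you want a clean self-contained proof valid for all $p>1$ and $s\in(0,1)$, either add a hypothesis ensuring the principal value converges, or interpret the left-hand side as the limit of the symmetrically truncated double integrals, for which your cancellation argument works verbatim.
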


\begin{prop}\label{parti}
Let $u$ and $v$ be bounded $C^2$ functions in $\R^N$. Then,
\begin{align*}
\frac{1}{2}\int_{\R^{2N}\setminus(C\Omega)^2}&|u(x)-u(y)|^{p-2}\frac{(u(x)-u(y))(v(x)-v(y))}{|x-y|^{N+ps}}\,dxdy \\
&=\int_\Omega v (-\Delta)^s_p u\,dx+ \int_{\R^N \setminus \Omega}v\mathscr{N}_{s,p}u\,dx. 
\end{align*}
\end{prop}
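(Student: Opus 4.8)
The plan is to reduce everything to the two symmetries carried by the integrand. Set
\[
j(x,y):=|u(x)-u(y)|^{p-2}\frac{u(x)-u(y)}{|x-y|^{N+ps}},
\]
so that $j$ is antisymmetric, $j(y,x)=-j(x,y)$, while the full kernel $K(x,y):=j(x,y)\big(v(x)-v(y)\big)$ is symmetric, $K(y,x)=K(x,y)$. With this notation $(-\Delta)^s_p u(x)=PV\int_{\R^N}j(x,y)\,dy$ for $x\in\Omega$, and $\mathscr{N}_{s,p}u(x)=\int_\Omega j(x,y)\,dy$ for $x\in\R^N\setminus\overline\Omega$.

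First I would split the domain of integration into the disjoint union
\[
\R^{2N}\setminus(C\Omega)^2=(\Omega\times\Omega)\cup(\Omega\times C\Omega)\cup(C\Omega\times\Omega),
\]
and use the symmetry of $K$ to identify $\int_{C\Omega\times\Omega}K=\int_{\Omega\times C\Omega}K$, so that the left-hand side equals $\frac12\int_{\Omega\times\Omega}K+\int_{\Omega\times C\Omega}K$. On $\Omega\times\Omega$ I would write $K(x,y)=j(x,y)v(x)-j(x,y)v(y)$, relabel $x\leftrightarrow y$ in the last term and use $j(y,x)=-j(x,y)$ to get $\int_{\Omega\times\Omega}j(x,y)v(y)\,dxdy=-\int_{\Omega\times\Omega}j(x,y)v(x)\,dxdy$; hence $\frac12\int_{\Omega\times\Omega}K=\int_\Omega v(x)\big(\int_\Omega j(x,y)\,dy\big)dx$ by Fubini. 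On $\Omega\times C\Omega$ the same splitting of $K$ gives $\int_\Omega v(x)\big(\int_{C\Omega}j(x,y)\,dy\big)dx-\int_{C\Omega}v(y)\big(\int_\Omega j(x,y)\,dx\big)dy$. Adding the two contributions, the ``$v(x)$'' terms combine into $\int_\Omega v(x)\big(\int_{\R^N}j(x,y)\,dy\big)dx=\int_\Omega v\,(-\Delta)^s_p u\,dx$, while in the remaining term $\int_\Omega j(x,y)\,dx=-\int_\Omega j(y,x)\,dx=-\mathscr{N}_{s,p}u(y)$ for $y\in C\Omega$, so $-\int_{C\Omega}v(y)\big(\int_\Omega j(x,y)\,dx\big)dy=\int_{C\Omega}v\,\mathscr{N}_{s,p}u\,dx$. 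This is exactly the claimed identity (and, taking $v\equiv1$, one recovers Proposition~\ref{parti1}).

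The only genuinely delicate point is that the single flux $j$ is not absolutely integrable near the diagonal, whereas $K$ is: the $C^2$ regularity of $u$ and $v$ makes the integrand of the left-hand side $O(|x-y|^{p-N-ps})$ near $\{x=y\}$, which is integrable for every $p>1$, $s\in(0,1)$. Therefore the relabelling and Fubini steps above must first be performed on the truncated region $\{|x-y|>\varepsilon\}$, and then one lets $\varepsilon\to0$: on the left-hand side this is dominated convergence; on the right-hand side one invokes the classical fact that for a bounded $C^2$ function the principal value defining $(-\Delta)^s_p u$ is an honest, locally uniformly convergent limit (second-order Taylor cancellation in $\int_{\{|x-y|>\varepsilon\}}j(x,y)\,dy$), so that $(-\Delta)^s_p u$ and $\mathscr{N}_{s,p}u$ are bounded on the relevant sets and the outer integrals pass to the limit as well. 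I expect this $\varepsilon$-regularisation — making the principal value interact correctly with Fubini — to be the main obstacle; the rest is bookkeeping with the symmetry of $K$ and the antisymmetry of $j$.
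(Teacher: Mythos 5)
The paper does not actually prove Proposition~\ref{parti}: it is recalled from \cite{BMPS} (and is the $p$-analogue of the computation in \cite{DPROV}), so your argument can only be judged on its own merits. On those merits it is correct and is essentially the standard proof: the decomposition $\R^{2N}\setminus(C\Omega)^2=(\Omega\times\Omega)\cup(\Omega\times C\Omega)\cup(C\Omega\times\Omega)$, the symmetry of $K$ and antisymmetry of $j$, and the reassembly of the $v(x)$-terms into $\int_\Omega v\,(-\Delta)^s_pu$ and of the $v(y)$-term into $\int_{C\Omega}v\,\mathscr{N}_{s,p}u$ are exactly the right bookkeeping, and you correctly identify that the only analytic issue is that $j$ alone is not absolutely integrable across the diagonal, so the splitting must be done on $\{|x-y|>\varepsilon\}$ first. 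Two small remarks. First, on the off-diagonal block $\Omega\times C\Omega$ no truncation is actually needed: $|j(x,y)v(x)|$ and $|j(x,y)v(y)|$ are separately $O(|x-y|^{p-1-N-ps})$ near $\partial\Omega$, and for a Lipschitz domain $\int_\Omega\int_{C\Omega}|x-y|^{-N-\alpha}\,dx\,dy<\infty$ exactly when $\alpha<1$, which holds here since $\alpha=1-p(1-s)<1$; this also shows $\mathscr{N}_{s,p}u\in L^1(C\Omega)$ directly. Second, your appeal to ``second-order Taylor cancellation'' for the convergence of the principal value is fine for $p\geq2$, but for $1<p<2$ the map $t\mapsto|t|^{p-2}t$ is only $(p-1)$-H\"older, and at critical points of $u$ the symmetrized integrand is merely $O(|z|^{2(p-1)-N-ps})$, which is not integrable when $s\geq 2(p-1)/p$; the pointwise existence of $(-\Delta)^s_pu$ for smooth $u$ is genuinely delicate in that range. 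This gap is inherited from the statement itself (which presupposes that $(-\Delta)^s_pu$ is defined), so it does not count against your argument relative to the paper, but it deserves an explicit caveat rather than the blanket claim of locally uniform convergence.
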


The integration by parts formula in Proposition \ref{parti} leads to this natural definition:
\begin{dfn}
Let $f\in L^{p'}(\Omega)$ and $g\in L^1(\R^N \setminus \overline \Omega)$. We say that $u\in X$ is a weak solution of 
\begin{equation}\label{probg}
\begin{cases}
(-\Delta)^s_p u =f \quad $ in $ \Omega,
\\
\mathscr{N}_{s,p}u=g  \quad \quad $ in $ \R^N \setminus \overline{\Omega},
\end{cases}
\end{equation}
whenever
\begin{equation}\label{weakg}
\frac{1}{2}\int_{\R^{2N}\setminus(C\Omega)^2}\frac{J_p(u(x)-u(y))(v(x)-v(y))}{|x-y|^{N+ps}}\,dxdy =
\int_\Omega fv\,dx + \int_{\R^N\setminus \overline \Omega} gv\,dx
\end{equation}
for every $v\in X$, where
\[
J_p(u(x)-u(y)):=|u(x)-u(y)|^{p-2}(u(x)-u(y)). 
\]
\end{dfn}

As a consequence of this definition, we have the following result
\begin{theorem}\label{boundary}
Let $u$ be a weak solution of \eqref{probg}. Then, $\mathscr{N}_{s,p}u=g$ a.e. in $\R^N\setminus \overline \Omega$.
\end{theorem}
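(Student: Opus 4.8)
The plan is to test the weak formulation \eqref{weakg} against functions that vanish in a neighbourhood of $\overline\Omega$: for such test functions the bilinear left-hand side of \eqref{weakg} will reduce exactly to $\int_{\R^N\setminus\overline\Omega}v\,\mathscr{N}_{s,p}u$, and comparing with the right-hand side will give $\mathscr{N}_{s,p}u=g$ in a weak sense on $\R^N\setminus\overline\Omega$, hence a.e.

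First I would check admissibility of characteristic functions of suitable sets. Fix a bounded measurable set $E\subset\R^N\setminus\overline\Omega$ with $\delta:=\operatorname{dist}(E,\overline\Omega)>0$ (such sets exist and exhaust $\R^N\setminus\overline\Omega$, since the latter is open) and put $v=\chi_E$. Then $v\in X$: the $L^p(\Omega)$–term vanishes; the weighted term equals $\int_E|g|\le\|g\|_{L^1(\R^N\setminus\overline\Omega)}<\infty$; and in the Gagliardo term only pairs with exactly one coordinate in $\Omega$ contribute, on which $|x-y|\ge\delta$, so that term is at most $2|E|\,|\Omega|\,\delta^{-(N+ps)}<\infty$. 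The same kernel bound, combined with $u\in L^p(\Omega)$, the embedding of $X$ into $L^p_{\mathrm{loc}}(\R^N)$ recalled above, and H\"older's inequality, shows that $\int_\Omega|u(x)-u(y)|^{p-1}|x-y|^{-(N+ps)}\,dx<\infty$ for a.e.\ $y\in\R^N\setminus\overline\Omega$ and that $\mathscr{N}_{s,p}u\in L^1(E)$; in particular $\mathscr{N}_{s,p}u$ is a well-defined measurable function on $\R^N\setminus\overline\Omega$.

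Then I would insert $v=\chi_E$ into \eqref{weakg}. Since $v\equiv0$ on $\Omega$, the integrand vanishes on $\Omega\times\Omega$, and on the two remaining pieces $(\R^N\setminus\Omega)\times\Omega$ and $\Omega\times(\R^N\setminus\Omega)$ of $\R^{2N}\setminus(C\Omega)^2$ the change of variables $x\leftrightarrow y$ (together with the oddness of $J_p$) shows the two contributions coincide. Using Fubini's theorem, justified by the integrability established above, the left-hand side of \eqref{weakg} becomes $\int_E\bigl(\int_\Omega J_p(u(x)-u(y))\,|x-y|^{-(N+ps)}\,dy\bigr)\,dx$, which is exactly $\int_E\mathscr{N}_{s,p}u(x)\,dx$ by \eqref{ns}, while the right-hand side is $\int_E g\,dx$ because $v$ vanishes on $\Omega$. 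Hence $\int_E(\mathscr{N}_{s,p}u-g)\,dx=0$ for every bounded $E\subset\R^N\setminus\overline\Omega$ at positive distance from $\overline\Omega$; exhausting $\R^N\setminus\overline\Omega$ by an increasing sequence of such sets gives $\mathscr{N}_{s,p}u=g$ a.e.\ in $\R^N\setminus\overline\Omega$.

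The algebra — the symmetrisation and the identification of $\int_\Omega J_p(u(x)-u(y))|x-y|^{-(N+ps)}\,dy$ with $\mathscr{N}_{s,p}u(x)$ — is immediate; the main thing to get right is the integrability bookkeeping, namely that $\mathscr{N}_{s,p}u$ is finite a.e.\ and locally integrable on $\R^N\setminus\overline\Omega$ and that Fubini applies, since $u$ is only assumed to lie in $X$ and so Proposition~\ref{parti} cannot be used directly. As the kernel $|x-y|^{-(N+ps)}$ is harmless once the test function is supported away from $\overline\Omega$, this reduces to a short estimate based on $u\in L^p(\Omega)$.
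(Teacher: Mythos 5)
Your argument is correct and follows essentially the same route as the paper: test the weak formulation against functions vanishing on $\Omega$, note that only the two mixed pieces of $\R^{2N}\setminus(C\Omega)^2$ contribute, symmetrize them, and identify the inner integral with $\mathscr{N}_{s,p}u$. The only (harmless) difference is that you localize with characteristic functions of sets at positive distance from $\overline\Omega$ rather than with $C^\infty_c(\R^N\setminus\overline\Omega)$ test functions, and you are somewhat more explicit than the paper about why these test functions lie in $X$ and why Fubini's theorem applies.
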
 

\begin{proof}
First, we take $v\in X$ such that $v\equiv 0$ in $\Omega$ as a test function in \eqref{weakg}, obtaining
\begin{align*}
\int_{\R^N\setminus \overline \Omega} gv\,dx&=
-\frac{1}{2}\int_\Omega \int_{\R^N\setminus \overline \Omega}\frac{J_p(u(x)-u(y))v(y)}{|x-y|^{N+ps}}\,dydx \\
&\quad +\frac{1}{2}\int_{\R^N\setminus \overline \Omega}\int_\Omega \frac{J_p(u(x)-u(y))v(x)}{|x-y|^{N+ps}}\,dydx \\
&=-\int_\Omega \int_{\R^N\setminus \overline \Omega}\frac{J_p(u(x)-u(y))v(y)}{|x-y|^{N+ps}}\,dydx \\
&=-\int_{\R^N\setminus \overline \Omega}v(y)\int_\Omega \frac{J_p(u(x)-u(y))}{|x-y|^{N+ps}}\,dxdy \\
&=-\int_{\R^N\setminus \overline \Omega}v(y)\mathscr{N}_{s,p}u(y)\,dy.
\end{align*}
Therefore,
\[
\int_{\R^N\setminus \overline \Omega} (\mathscr{N}_{s,p}u(x)-g(x))v(x)\,dx=0
\]
or every $v\in X$ which is 0 in $\Omega$. In particular, this is true for every $v\in C^\infty_c(\R^N\setminus \overline \Omega)$, and so $\mathscr{N}_{s,p}u(x)=g(x)$ a.e. in $\R^N\setminus \overline \Omega$.
\end{proof}

From the definition of weak solution, we have the following
\begin{prop}
Let $f\in L^{p'}(\Omega)$ and $g\in L^1(\R^N \setminus \Omega)$. Let $I_g:X\to \R$ be the functional defined as
$$I_g(u):=\frac{1}{2p}\int_{\R^{2N}\setminus(C\Omega)^2}\frac{|u(x)-u(y)|^p}{|x-y|^{N+ps}}\,dxdy
-\int_\Omega fu\,dx - \int_{\R^N\setminus \Omega} gu\,dx $$
for every $u\in X$. Then any critical point of $I_g$ is a weak solution of problem \eqref{probg}. 
\end{prop}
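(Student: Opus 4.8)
The plan is to prove that the functional $I_g$ is of class $C^1$ on $X$ and to identify its differential with the bilinear form appearing in the weak formulation \eqref{weakg}; then, by the very definition of weak solution, any $u\in X$ with $I_g'(u)=0$ solves \eqref{probg}. Concretely, I would first check that $I_g$ is real-valued on $X$: the double integral is finite because $u\in X$; the term $\int_\Omega fu$ is finite by H\"older's inequality since $f\in L^{p'}(\Omega)$ and $u\in L^p(\Omega)$; and $\int_{\R^N\setminus\Omega}gu$ is finite because, writing $|gu|=|g|^{1/p'}\bigl(|g|^{1/p}|u|\bigr)$ and applying H\"older with exponents $p'$ and $p$, one gets $\int_{\R^N\setminus\Omega}|gu|\le\|g\|_{L^1(\R^N\setminus\Omega)}^{1/p'}\,\bigl\||g|^{1/p}u\bigr\|_{L^p(\R^N\setminus\Omega)}<\infty$ by the definition of $\|\cdot\|_X$.

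Next I would compute, for fixed $u,v\in X$, the derivative of $t\mapsto I_g(u+tv)$ at $t=0$. The two linear terms contribute $-\int_\Omega fv\,dx-\int_{\R^N\setminus\Omega}gv\,dx$. For the leading term, setting $a(x,y):=u(x)-u(y)$ and $b(x,y):=v(x)-v(y)$, I would differentiate under the integral sign using the mean value theorem for $\sigma\mapsto|\sigma|^p$, which is $C^1$ with derivative $p\,J_p(\sigma)$ since $p>1$: there is $\theta=\theta(x,y,t)\in(0,1)$ such that
\[
\frac{|a+tb|^p-|a|^p}{t}=p\,J_p\!\bigl(a+\theta t b\bigr)\,b .
\]
For $|t|\le 1$ the right-hand side is bounded in modulus by $p\,(|a|+|b|)^{p-1}|b|$, and by Young's inequality $(|a|+|b|)^{p-1}|b|\le C_p\,(|a|^p+|b|^p)$; dividing by $|x-y|^{N+ps}$ produces a majorant that is integrable on $\R^{2N}\setminus(C\Omega)^2$ because $u,v\in X$. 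Since the difference quotient converges pointwise to $p\,J_p(a)\,b/|x-y|^{N+ps}$ as $t\to 0$, dominated convergence yields
\[
\langle I_g'(u),v\rangle=\frac12\int_{\R^{2N}\setminus(C\Omega)^2}\frac{J_p(u(x)-u(y))(v(x)-v(y))}{|x-y|^{N+ps}}\,dxdy-\int_\Omega fv\,dx-\int_{\R^N\setminus\Omega}gv\,dx .
\]
A further routine estimate, again based on the same Young-type bound and on the continuity of $J_p$, shows that $v\mapsto\langle I_g'(u),v\rangle$ is a bounded linear functional on $X$ and that $u\mapsto I_g'(u)$ is continuous, so $I_g\in C^1(X)$; in any case, it is this G\^ateaux derivative that matters here.

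The conclusion is then immediate: if $u$ is a critical point of $I_g$, then $\langle I_g'(u),v\rangle=0$ for every $v\in X$, which is exactly identity \eqref{weakg}, so $u$ is a weak solution of \eqref{probg}. The only genuinely delicate step is producing the $t$-uniform integrable majorant needed for dominated convergence; once the mixed term $(|a|+|b|)^{p-1}|b|$ is absorbed into $|a|^p+|b|^p$ via Young's inequality — both of which are integrable against $|x-y|^{-N-ps}$ on $\R^{2N}\setminus(C\Omega)^2$ whenever $u,v\in X$ — the rest is bookkeeping.
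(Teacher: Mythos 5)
Your argument is correct and follows the same route as the paper: the well-definedness of $I_g$ is established by exactly the same H\"older estimates (including the splitting $|gu|=|g|^{1/p'}\,|g|^{1/p}|u|$ against the weighted term in $\|\cdot\|_X$), and the identification of the G\^ateaux derivative with the left-hand side of \eqref{weakg} is the step the paper dismisses as ``standard,'' which you carry out correctly via the mean value theorem, a Young-type majorant, and dominated convergence. Nothing is missing.
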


\begin{proof}
We only show that $I_g$ is well defined on $X$. Indeed, if $u\in X$ we have
$$\left| \int_\Omega fu\,dx\right|\leq \|f\|_{L^{p'}(\Omega)}\|u\|_{L^{p}(\Omega)}\leq C\|u\|_X. $$
In addition,
$$\left|\int_{\R^N\setminus \Omega} gu \,dx\right|\leq \int_{\R^N\setminus \Omega} |g|^\frac{1}{p'}|g|^\frac{1}{p} |u|
\leq \|g\|^\frac{1}{p'}_{L^1(\R^N \setminus \Omega)} \||g|^\frac{1}{p}u\|_{L^p(\R^N\setminus \Omega)}
\leq C\|u\|_X.$$
Then, if $u\in X$, we have 
$$|I_g(u)|\leq C\|u\|_X <\infty. $$
The computation of the first variation of $I_g$ is standard.
\end{proof}

The next result gives a sort of maximum principle.

\begin{prop}
Let $f\in L^{p'}(\Omega)$ and $g\in L^1(\R^N \setminus \Omega)$. Let $u\in X$ be a weak solution of \eqref{probg} with
$f\geq 0$ and $g\geq 0$. Then, $u$ is constant.
\end{prop}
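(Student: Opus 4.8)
The plan is to exploit that constant functions are admissible test functions and that, under the sign hypotheses, the right-hand side of \eqref{weakg} becomes sign-definite.

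First I would check that the constant function $v\equiv 1$ belongs to $X$. Indeed $\|1\|_{L^p(\Omega)}^p=|\Omega|<\infty$, $\||g|^{1/p}\cdot 1\|_{L^p(\R^N\setminus\Omega)}^p=\|g\|_{L^1(\R^N\setminus\Omega)}<\infty$ (this is exactly where the hypothesis $g\in L^1(\R^N\setminus\Omega)$ is used), and the Gagliardo double integral of a constant function vanishes identically. Plugging $v\equiv 1$ into \eqref{weakg}, the left-hand side vanishes because $v(x)-v(y)\equiv 0$, so
\[
0=\int_\Omega f\,dx+\int_{\R^N\setminus\Omega}g\,dx .
\]
Since $f\ge 0$ in $\Omega$ and $g\ge 0$ in $\R^N\setminus\Omega$, this forces $f=0$ a.e. in $\Omega$ and $g=0$ a.e. in $\R^N\setminus\Omega$.

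With $f$ and $g$ now identically zero, the weak formulation \eqref{weakg} reduces to
\[
\frac12\int_{\R^{2N}\setminus(C\Omega)^2}\frac{J_p(u(x)-u(y))(v(x)-v(y))}{|x-y|^{N+ps}}\,dxdy=0\qquad\text{for every }v\in X .
\]
Choosing $v=u$, which lies in $X$, and recalling that $J_p(t)\,t=|t|^p$, I obtain
\[
\int_{\R^{2N}\setminus(C\Omega)^2}\frac{|u(x)-u(y)|^p}{|x-y|^{N+ps}}\,dxdy=0 ,
\]
so that $u(x)=u(y)$ for a.e. $(x,y)\in\R^{2N}\setminus(C\Omega)^2$. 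Finally I would upgrade this to the statement that $u$ is constant exactly as in the proof that $\|\cdot\|_X$ is a norm: by Fubini's theorem there exists $x_0\in\Omega$ such that $u(x_0)=u(y)$ for a.e.\ $y\in\R^N$ (note that, since $x_0\in\Omega$, the whole slice $\{x_0\}\times\R^N$ is contained in $\R^{2N}\setminus(C\Omega)^2$), hence $u\equiv u(x_0)$ a.e.\ in $\R^N$.

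I do not foresee a genuine obstacle here. The only two points requiring a little care are the admissibility of the constant test function (where the integrability assumption on $g$ enters) and the measure-theoretic passage from ``$u(x)=u(y)$ a.e.\ on the product domain'' to ``$u$ constant'', which is routine and has already been carried out earlier in the paper.
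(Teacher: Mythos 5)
Your argument is correct and follows essentially the same route as the paper: test with $v\equiv 1$ to force $f=0$ and $g=0$ from the sign conditions, then test with $v=u$ to annihilate the Gagliardo seminorm and conclude constancy. The extra details you supply (membership of the constant function in $X$ and the Fubini argument upgrading $u(x)=u(y)$ a.e.\ to constancy) are sound and merely make explicit what the paper leaves implicit.
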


\begin{proof}
First, we notice that $v\equiv 1$ belongs to $X$. So, using it as a test function in \eqref{weakg} we obtain
$$0\leq \int_\Omega f\,dx = -\int_{\R^N\setminus \Omega} g\,dx \leq0.$$
Hence, $f=0$ a.e. in $\Omega$ and $g=0$ a.e. in $ \R^N\setminus \Omega$. Now, taking $v=u$ as a test function again in
\eqref{weakg}, we get
$$\int_{\R^{2N}\setminus(C\Omega)^2}\frac{|u(x)-u(y)|^p}{|x-y|^{N+ps}}\,dxdy=0, $$
so $u$ must be constant. 
\end{proof}

From now on, we concentrate on homogeneous boundary conditions, so that $g\equiv0$.

Denoting by $X'$ the dual of $X$, we can define the operator $A:X\to X'$ such that
$$
\begin{aligned}
\langle A(u),v\rangle&= \int_\Omega |u|^{p-2}uv\,dx \\
&+ \int_{\R^{2N}\setminus(C\Omega)^2}\frac{J_p(u(x)-u(y))(v(x)-v(y))}{|x-y|^{N+ps}}\,dxdy
\end{aligned} $$
for all $u,v\in X$. In this way $A$ is ($p-1$)-homogeneous and odd, and such that 
$$\langle A(u),u\rangle= \|u\|_X^p, \quad \quad |\langle A(u),v\rangle| \leq \|u\|_X^{p-1} \|v\|_X. $$
By the uniform convexity of $X$, $A$ satisfies the ($S$) property, that is, for all $(u_n)_n$ in $X$ such that
$u_n \rightharpoonup u$ in $X$ and $\langle A(u_n),u_n-u\rangle \to 0$, then $u_n \to u$ in $X$,
 see \cite[Proposition 1.3]{peago}.

\section{The eigenvalue problem}

In this section we consider the nonlinear eigenvalue problem
\begin{equation}\label{probla}
\begin{cases}
(-\Delta)^s_p u = \lambda |u|^{p-2}u \quad $ in $ \Omega,
\\
\mathscr{N}_{s,p}u=0  \quad \quad $ in $ \R^N \setminus \overline{\Omega},
\end{cases}
\end{equation}
depending on parameter $\lambda\in \R$. If \eqref{probla} admits a weak solution $u\in X$ (notice that now $g\equiv0$), that is 
$$\frac{1}{2} \int_{\R^{2N}\setminus(C\Omega)^2}\frac{J_p(u(x)-u(y))(v(x)-v(y))}{|x-y|^{N+ps}}\,dxdy  
 =\lambda \int_\Omega |u|^{p-2}uv\,dx$$
for all $v\in X$, then we say that {\it $\lambda$ is an eigenvalue of $(-\Delta)^s_p$ with $p-$Neumann boundary conditions and associated $\lambda$-eigenfunction $u$}. As in the classical case, 
we call the set of all the eigenvalues the point spectrum of $(-\Delta)^s_p $ in $X$ and we denote it by $\sigma(s,p)$.

First of all we observe that for $\lambda=0$ constant functions are all $0$-eigenfunctions. Since all the eigenvalues are
obviously non negative, we have that $\lambda_1=0$ is the first eigenvalue. Moreover,
$$ \int_{\R^{2N}\setminus(C\Omega)^2}|u(x)-u(y)|^{p-2}\frac{(u(x)-u(y))(v(x)-v(y))}{|x-y|^{N+ps}}\,dxdy =0 $$
for all $v\in X$ implies $u$ constant, so all the $\lambda_1$-eigenfunctions are just constant functions.

As usual, we can construct a sequence $(\lambda_k)_k$ of eigenvalues for problem \eqref{probla},
analogously to the Dirichlet case treated in \cite{iasq}, setting
\[
\lambda_k= \inf_{A\in \mathcal{F}_k} \sup_{u\in A}\,\frac{[u]^p_{s,p}}{2},
\]
with 
$$ [u]^p_{s,p}=\int_{\R^{2N}\setminus(C\Omega)^2}\frac{|u(x)-u(y)|^p}{|x-y|^{N+ps}}\,dxdy. $$
Here, if $\mathcal{F}$ is the family of all nonempty, closed, symmetric 
subsets of $S=\lbrace u\in X :\,\, \int_\Omega |u|^p=1 \rbrace$, for all $k\in \mathbb{N}$ we have set
$$\mathcal{F}_k= \lbrace A\in \mathcal{F}:\, i(A)\geq k \rbrace, $$
while $i(A)$ is the cohomological index of Fadell and Rabinowitz \cite{fara}.

In order to prove that $\lambda_k$ is an eigenvalue for every $k\in \mathbb{N}$, we proceed in the standard way:
set $\varphi(u)=\frac{[u]^p_{s,p}}{2}$, $I(u)=\|u\|_{L^p(\Omega)}^p$ and let
 $\bar{\varphi}$ be the restriction of $\varphi$ to $S$.

\begin{prop}\label{ps}
The functional $\bar{\varphi}$ satisfies the Palais-Smale condition at any level $c\in \R$.
\end{prop}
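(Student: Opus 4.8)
The plan is to verify the Palais--Smale condition for $\bar\varphi$ on the manifold $S=\{u\in X:\|u\|_{L^p(\Omega)}^p=1\}$ by a by-now classical argument, the only non-routine point being the interplay between the $L^p(\Omega)$-normalization and the seminorm $[u]_{s,p}^p$ inside the space $X$, whose norm contains the extra term $\||g|^{1/p}u\|_{L^p(\R^N\setminus\Omega)}$ (irrelevant here since $g\equiv0$, so that $\|u\|_X^p=\|u\|_{L^p(\Omega)}^p+[u]_{s,p}^p$). So let me take a sequence $(u_n)_n\subset S$ with $\bar\varphi(u_n)\to c$ and $\bar\varphi'(u_n)\to 0$ in $T_{u_n}^*S$. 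By the Lagrange multiplier rule on $S$, this means there are reals $\mu_n$ with
\[
\varphi'(u_n)-\mu_n I'(u_n)\to 0 \quad\text{in } X',
\]
where $\varphi'(u)=\tfrac12 A_0(u)$ with $A_0$ the ``seminorm part'' of the operator $A$ introduced above, and $I'(u)=p\,|u|^{p-2}u$ (viewed in $X'$ via the $L^p(\Omega)$-pairing). The first step is to identify $\mu_n$: pairing the displayed relation with $u_n\in T_{u_n}S$ and using $\langle\varphi'(u_n),u_n\rangle=[u_n]_{s,p}^p=2\bar\varphi(u_n)$ and $\langle I'(u_n),u_n\rangle=p\|u_n\|_{L^p(\Omega)}^p=p$, one gets $p\mu_n = 2\bar\varphi(u_n)+o(1)\to 2c$, so $(\mu_n)$ is bounded.

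Next I would show $(u_n)_n$ is bounded in $X$. Since $\bar\varphi(u_n)\to c$ gives $[u_n]_{s,p}^p=2\bar\varphi(u_n)$ bounded, and $\|u_n\|_{L^p(\Omega)}^p=1$ on $S$, boundedness in $X$ is immediate from $\|u_n\|_X^p=\|u_n\|_{L^p(\Omega)}^p+[u_n]_{s,p}^p$. By reflexivity of $X$ (Proposition in Section~2) we may pass to a subsequence with $u_n\rightharpoonup u$ in $X$; by Remark~\ref{immer}, $X$ embeds compactly into $L^p(\Omega)$, hence $u_n\to u$ strongly in $L^p(\Omega)$ and a.e.\ in $\Omega$. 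In particular $\|u\|_{L^p(\Omega)}^p=1$, so $u\in S$ (this also shows the limit is a genuine constrained critical point, $\mu_n\to2c/p$, though strictly speaking only strong convergence is required). Now I test $\varphi'(u_n)-\mu_n I'(u_n)\to 0$ against $u_n-u$: the term $\langle I'(u_n),u_n-u\rangle=p\int_\Omega|u_n|^{p-2}u_n(u_n-u)\,dx\to 0$ because $|u_n|^{p-2}u_n$ is bounded in $L^{p'}(\Omega)$ and $u_n-u\to0$ in $L^p(\Omega)$; combined with boundedness of $\mu_n$ this yields $\langle\varphi'(u_n),u_n-u\rangle\to0$, i.e.
\[
\int_{\R^{2N}\setminus(C\Omega)^2}\frac{J_p(u_n(x)-u_n(y))\bigl((u_n-u)(x)-(u_n-u)(y)\bigr)}{|x-y|^{N+ps}}\,dx\,dy\to0.
\]

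Finally I would upgrade weak to strong convergence in $X$. Adding the trivially convergent term $\int_\Omega|u_n|^{p-2}u_n(u_n-u)\,dx\to0$ to the last display, I get exactly $\langle A(u_n),u_n-u\rangle\to0$, where $A:X\to X'$ is the operator defined at the end of Section~2; since $u_n\rightharpoonup u$ in $X$, the $(S)$-property of $A$ recalled there (from \cite{peago}, via uniform convexity of $X$) gives $u_n\to u$ strongly in $X$. Thus $\bar\varphi$ satisfies the Palais--Smale condition at every level $c\in\R$. The only mildly delicate point is the bookkeeping in the Lagrange-multiplier step — making sure the functional $I$ and the constraint are $C^1$ and that the normal space at $u_n$ is spanned by $I'(u_n)$, which holds because $\langle I'(u_n),u_n\rangle=p\neq0$ — but the substance of the argument is carried by the compact embedding $X\hookrightarrow\hookrightarrow L^p(\Omega)$ and the $(S)$-property of $A$, both already available.
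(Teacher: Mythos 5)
Your proof is correct and follows essentially the same route as the paper's: bound $(u_n)$ via the constraint and the level, extract a subsequence converging weakly in $X$ and strongly in $L^p(\Omega)$, test the Palais--Smale relation against $u_n-u$ to get $\langle A(u_n),u_n-u\rangle\to0$, and invoke the $(S)$-property of $A$. The only (harmless) slip is the homogeneity constant: $\langle\varphi'(u),u\rangle=\tfrac{p}{2}[u]_{s,p}^p=p\,\varphi(u)$ rather than $[u]_{s,p}^p$, so in fact $\mu_n\to c$; since only boundedness of $(\mu_n)$ is used, nothing changes.
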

\begin{proof}
Let $(u_n)_n\subset S$ and $(\mu_n)_n\subset \R$ be such that $\varphi(u_n)\to c$ as $n\to \infty$ and $\varphi'(u_n)-\mu_n I'(u_n)\to 0$ in $X'$.
We have 
$$ \|u_n\|_X^p= 1+\varphi(u_n) \to 1+c, $$
so $(u_n)_n$ is bounded in $X$. Up to a subsequence, we have $u_n\rightharpoonup u$ in $X$ and $u_n\to u$ in $L^p(\Omega)$ for
some $u\in X$ as $n\to \infty$, see Remark \ref{immer}. In particular, $u\in S$. We also get that $\varphi(u_n)-\mu_n \to 0$, and so $\mu_n \to c$. 
Now, we have
\begin{align*}
|p\langle A(u_n),u_n-u\rangle|&=|\langle I'(u),u_n-u\rangle + \langle \varphi'(u),u_n-u\rangle | \\
&=|\langle I'(u),u_n-u\rangle + \mu_n \langle I'(u),u_n-u\rangle +o(1)|\\
&\leq |1+\mu_n|\|u_n-u\|_{L^p(\Omega)}^p+o(1) \to 0.
\end{align*}
So, by the ($S$) property of $A$, we get that $u_n\to u$ in $X$.
\end{proof}

Now we can give the desired result for the sequence $(\lambda_k)_k$.

\begin{prop}
For all $k\in \mathbb{N}$, $\lambda_k$ is an eigenvalue of \eqref{probla}. In addition, $\lambda_k \to \infty$.
\end{prop}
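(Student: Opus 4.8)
The plan is to split the statement into two parts: that each $\lambda_k$ is a critical value of the restriction $\bar\varphi$ of $\varphi$ to the $C^1$ manifold $S=\{u\in X:\ \|u\|_{L^p(\Omega)}^p=1\}$ — whence an eigenvalue of \eqref{probla} — and that the nondecreasing sequence $(\lambda_k)_k$ is unbounded.

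For the first part I would run the standard minimax scheme based on the cohomological index of Fadell and Rabinowitz \cite{fara}, in the form used for the Dirichlet fractional problem in \cite{iasq} and in abstract form in \cite{peago}. Since $X$ is infinite dimensional, $S$ carries closed symmetric subsets of arbitrarily large index, so each family $\mathcal F_k$ is nonempty and $\lambda_k$ is well defined; moreover $\varphi\ge0$ gives $\lambda_k\ge0$, and $\mathcal F_{k+1}\subseteq\mathcal F_k$ gives $\lambda_k\le\lambda_{k+1}$. Because $\bar\varphi$ satisfies the Palais--Smale condition at every level by Proposition~\ref{ps}, the quantitative deformation lemma on the constraint $S$ applies: if $\lambda_k$ were a regular value of $\bar\varphi$, one could deform, through an odd homeomorphism of $S$, any $A\in\mathcal F_k$ with $\sup_A\bar\varphi<\lambda_k+\varepsilon$ into the sublevel set $\{\bar\varphi<\lambda_k-\varepsilon\}$; since odd continuous maps do not lower the index, the deformed set would still lie in $\mathcal F_k$, contradicting the definition of $\lambda_k$ as an infimum (when $\lambda_k=0$ the sublevel set is empty, and emptiness already gives the contradiction). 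Hence there is $u\in S$ with $\bar\varphi(u)=\lambda_k$ and $\bar\varphi'(u)=0$, so by the Lagrange multiplier rule $\varphi'(u)=\mu I'(u)$ in $X'$ for some $\mu\in\R$, that is
\[
\frac12\int_{\R^{2N}\setminus(C\Omega)^2}\frac{J_p(u(x)-u(y))(v(x)-v(y))}{|x-y|^{N+ps}}\,dxdy=\mu\int_\Omega|u|^{p-2}uv\,dx\qquad\forall\,v\in X .
\]
Choosing $v=u$ and using $\|u\|_{L^p(\Omega)}^p=1$ together with the $p$-homogeneity of the left-hand side gives $\mu=\tfrac12[u]^p_{s,p}=\bar\varphi(u)=\lambda_k$; thus $\lambda_k$ is an eigenvalue with eigenfunction $u$.

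For the divergence $\lambda_k\to\infty$ I would argue by contradiction. If $\lambda_k\le\Lambda$ for all $k$, choose $A_k\in\mathcal F_k$ with $\sup_{A_k}\varphi\le\Lambda+1$; then every $A_k$ is contained in $B:=\{u\in S:\ \varphi(u)\le\Lambda+1\}$, which is bounded in $X$ because, since $g\equiv0$, one has $\|u\|_X^p=\|u\|_{L^p(\Omega)}^p+[u]^p_{s,p}=1+2\varphi(u)\le 2\Lambda+3$ on $B$. By monotonicity of the index, $i(B)\ge i(A_k)\ge k$ for every $k$, so $i(B)=\infty$. On the other hand, the compact embedding $X\hookrightarrow L^p(\Omega)$ of Remark~\ref{immer} makes the closure $\overline B$ of $B$ in $L^p(\Omega)$ a compact symmetric set all of whose elements have $L^p(\Omega)$-norm $1$, so $0\notin\overline B$ and $i(\overline B)<\infty$ by the classical finiteness of the cohomological index on compact symmetric sets avoiding the origin; since the inclusion $B\hookrightarrow\overline B$ is odd and continuous, $i(B)\le i(\overline B)<\infty$, a contradiction. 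Therefore $\lambda_k\to\infty$.

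The delicate points I expect are the verification that the deformation on the constraint $S$ can indeed be taken odd and $S$-preserving (so that the index estimate on the deformed set survives), and the passage between the index of $B$ computed in $X$ and that of its compact closure in the coarser $L^p(\Omega)$-topology; both are by now standard, but, as indicated, should be supported by the appropriate references.
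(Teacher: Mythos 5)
Your proposal is correct and follows exactly the standard Ljusternik--Schnirelmann minimax scheme based on the cohomological index that the paper invokes by simply citing \cite[Proposition 2.2]{iasq}: the equivariant deformation plus Lagrange multiplier step for criticality, and the finiteness of the index of bounded sublevel sets via the compact embedding of Remark~\ref{immer} for the divergence, are the same ingredients used there. The paper writes no proof of its own, so your argument just supplies the omitted details of the same approach.
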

The proof is standard, see for example the proof of \cite[Proposition 2.2]{iasq}. We also recall that in \cite{delp} a characterization of the second eigenvalue is given, together with the asymptotic for $p\to \infty$.

Now we show that every eigenfunction, except the ones corresponding to the first eigenvalue, changes sign.

\begin{prop}
Let $v\in X$ be a solution to \eqref{probla} such that $v>0$ in $\Omega$. Then $\lambda=0$, hence $v$ is constant.
\end{prop}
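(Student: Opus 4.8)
The plan is to exploit the structural feature that distinguishes the Neumann setting from the Dirichlet one: constant functions belong to the energy space $X$ and are therefore admissible test functions. Concretely, I would take $w\equiv 1\in X$ — its membership in $X$ was already noted in the proof of the maximum principle, since the Gagliardo-type seminorm of a constant vanishes and $|\Omega|<\infty$ — and insert it into the weak formulation of \eqref{probla}.

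Since $w(x)-w(y)\equiv 0$, the whole left-hand side
\[
\frac12\int_{\R^{2N}\setminus(C\Omega)^2}\frac{J_p(v(x)-v(y))(w(x)-w(y))}{|x-y|^{N+ps}}\,dxdy
\]
vanishes identically, so the weak identity collapses to $0=\lambda\int_\Omega |v|^{p-2}v\,dx=\lambda\int_\Omega v^{p-1}\,dx$. The integral on the right is finite because $v\in X$ embeds into $L^p(\Omega)$, whence $v^{p-1}\in L^{p'}(\Omega)\subset L^1(\Omega)$, and it is strictly positive because $v>0$ in $\Omega$ and $|\Omega|>0$. Therefore $\lambda=0$.

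Once $\lambda=0$, I would conclude that $v$ is constant by testing the weak formulation with $w=v$ itself: this gives $[v]^p_{s,p}=0$, i.e. $|v(x)-v(y)|=0$ for a.e. $(x,y)\in\R^{2N}\setminus(C\Omega)^2$, and choosing $x\in C\Omega$, $y\in\Omega$ exactly as in the proof that $\|\cdot\|_X$ is a norm forces $v(x)=v(y)$, hence $v$ is constant on $\R^N$. Equivalently, one may simply invoke the observation made right after \eqref{probla} characterizing the $\lambda_1$-eigenfunctions.

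As for the main obstacle: there is essentially none. The only facts needed are that $1\in X$ (already established) and that $\int_\Omega v^{p-1}>0$ (immediate from $v>0$ and finiteness of the integral). The actual content of the statement is the remark that the Neumann boundary condition renders constants admissible test functions, which reduces the Picone-type argument required in the Dirichlet case to a one-line computation.
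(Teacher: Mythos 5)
Your proof is correct, but it takes a genuinely different (and much shorter) route than the paper. The paper adapts the hidden-convexity/discrete Picone argument of Franzina--Palatucci from the Dirichlet case: it interpolates between $v_\varepsilon$ and a normalized constant via $\sigma_t^\varepsilon=(tu_\varepsilon^p+(1-t)v_\varepsilon^p)^{1/p}$, tests the weak formulation with $\sigma_t^\varepsilon-v_\varepsilon$, and passes to the limit with Fatou and dominated convergence to reach $0\le-\lambda$. You instead exploit the one structural fact that makes the Neumann problem different: $1\in X$ (already used in the paper's maximum principle), so testing \eqref{probla} with $w\equiv1$ kills the left-hand side and yields $\lambda\int_\Omega v^{p-1}\,dx=0$, whence $\lambda=0$; then testing with $v$ gives $[v]_{s,p}^p=0$ and constancy. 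All the ingredients you use are justified ($v^{p-1}\in L^{p'}(\Omega)\subset L^1(\Omega)$, positivity of the integral). Note that your argument actually proves slightly more: for any eigenfunction with $\lambda\neq0$ one gets $\int_\Omega|v|^{p-2}v\,dx=0$, so the hypothesis $v>0$ can be relaxed to $v\ge0$, $v\not\equiv0$ in $\Omega$, and this directly delivers the ``every higher eigenfunction changes sign'' statement announced before the proposition. What the paper's heavier machinery buys is portability: the Picone-type comparison survives when constants are not admissible (Dirichlet conditions, weighted problems) and is the natural tool for simplicity of the principal eigenvalue; for the statement as written, your one-line reduction suffices.
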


\begin{proof}
We assume that $v\in X$ is strictly positive solution of \eqref{probla} such that $I(u)=1$, and take
$u\in X$ a $0$-eigenfunction with $I(u)=1$. We set  $v_\ve(x)=v(x)+\ve$, $u_\ve(x)=u(x)+\ve$ and
$$\sigma_t^\varepsilon(x)=\left(tu_\varepsilon(x)^p+(1-t)v_\varepsilon(x)^p\right)^\frac{1}{p} $$
for $x\in \R^N$, $t\in [0,1]$. It follows that $\sigma_t^\varepsilon \in X$ and
$$\varphi(\sigma_t^\varepsilon)\leq  t\varphi(u)+(1-t)\varphi(v) $$
for all $t\in [0,1]$, see \cite[Lemma 4.1]{frapa}. From this, we have
\begin{equation}\label{prima}
\varphi(\sigma_t^\varepsilon)-\varphi(v)\leq t(\varphi(u)-\varphi(v))=-t\lambda
\end{equation}
for all $t\in [0,1]$ and $\varepsilon$ small enough. Moreover, from the convexity of $\varphi$ we get
\begin{equation}\label{seconda}
\begin{aligned}
&\varphi(\sigma_t^\varepsilon)-\varphi(v)\geq \\
& \frac{p}{2}\int_{\R^{2N}\setminus(C\Omega)^2} J_p(v(x)-v(y))\frac{\sigma_t^\varepsilon(x)-\sigma_t^\varepsilon(y)-(v(x)-v(y))}{|x-y|^{N+ps}}\,dxdy,
\end{aligned}
\end{equation}
for all $t\in [0,1]$ and $\varepsilon$ small enough. Taking $\sigma_t^\varepsilon-v_\varepsilon$ as a test function in the weak formulation of \eqref{probla} for the couple $(v,\lambda)$, we obtain
\begin{equation}\label{terza}
\begin{aligned}
\frac{1}{2}\int_{\R^{2N}\setminus(C\Omega)^2} J_p(v(x)-v(y))
&\frac{\sigma_t^\varepsilon(x)-\sigma_t^\varepsilon(y)-(v_\varepsilon(x)-v_\varepsilon(y))}{|x-y|^{N+ps}}\,dxdy,\\
&=\lambda \int_{\Omega}v(x)^{p-1}(\sigma_t^\varepsilon(x)-v_\varepsilon(x))\,dx.
\end{aligned}
\end{equation}
Finally, from \eqref{prima}--\eqref{terza} we get
\begin{equation}\label{rapporto}
p\lambda \int_{\Omega}v(x)^{p-1}\frac{\sigma_t^\varepsilon(x)-v_\varepsilon(x)}{t}\,dx \leq -\lambda,
\end{equation}
for all $t\in (0,1]$ and $\varepsilon$ small enough. From the concavity of the $p$-th root follows that 
$$ \sigma_t^\varepsilon(x)-v_\varepsilon(x) \geq t(u_\varepsilon(x)-v_\varepsilon(x)) =t(u-v)(x)$$
in $\Omega$. So, we can apply Fatou's Lemma in \eqref{rapporto}, obtaining
\[
\lambda \int_{\Omega} \left(\frac{v(x)}{v_\varepsilon(x)}  \right)^{p-1}(u_\varepsilon(x)^p -v_\varepsilon(x)^p)\, dx \leq -\lambda
\]
for $\varepsilon$ small enough. Since $v>0$ in $\Omega$, from the dominated convergence Theorem and $I(u)=I(v)=1$, when
$\varepsilon\to0^+$ we get
$$0\leq -\lambda.$$
Since all the eigenvalues are non negative, we have $\lambda=0$ and so $v$ belongs to the first eigenspace, as claimed.
\end{proof}

Now we want to prove the boundedness of eigenfunctions in the whole of $\R^N$, starting as in \cite{frapa} to get the bound in $\Omega$, and exploiting the $p-$Neumann condition to get the bound in the complementary set of $\Omega$. More precisely, we have that the $L^\infty-$norm in $\Omega$ estimates the $L^\infty-$norm in the $\R^N\setminus \Omega$.

\begin{prop}
Let $s\in (0,1)$, $p>1$, and $u\in X$ be a solution of \eqref{probla} for some $\lambda\geq0$.
Then $u\in L^\infty (\R^N)$ and
\[
\|u\|_{L^\infty(\R^N)}=\|u\|_{L^\infty(\Omega)}.
\]
\end{prop}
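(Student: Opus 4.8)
The plan is to split the assertion into two parts proved by completely different means: the finiteness of $\|u\|_{L^\infty(\Omega)}$, obtained by a De Giorgi–Moser iteration exactly as in the Dirichlet case of \cite{frapa}; and the equality $\|u\|_{L^\infty(\R^N)}=\|u\|_{L^\infty(\Omega)}$, obtained from the pointwise validity of the Neumann condition (Theorem~\ref{boundary}) together with the sign of the kernel $|x-y|^{-(N+ps)}$.

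For the first part I would test the weak formulation of \eqref{probla} with the truncated power $v=u\,w_L^{(\beta-1)p}$, where $w_L:=\min\{|u|,L\}$, $L>0$, $\beta\ge 1$; this $v$ lies in $X$ because it is a globally Lipschitz function of $u$ vanishing at $0$, hence it increases neither $\|u\|_{L^p(\Omega)}$ nor the Gagliardo energy over $\R^{2N}\setminus(C\Omega)^2$ by more than a constant factor. Using the elementary pointwise inequality relating $J_p(a-b)\bigl(F(a)-F(b)\bigr)$ to $|G(a)-G(b)|^p$, with $F$ the truncated power above and $G(t)\approx t\,\min\{|t|,L\}^{\beta-1}$ (see \cite{frapa}), and discarding the nonnegative part of the resulting double integral supported in $\bigl(\R^{2N}\setminus(C\Omega)^2\bigr)\setminus(\Omega\times\Omega)$, one is left with
\[
\int_{\Omega\times\Omega}\frac{|G(u(x))-G(u(y))|^p}{|x-y|^{N+ps}}\,dx\,dy\ \le\ C\,\lambda\,\beta^{p-1}\!\int_\Omega |u|^{\beta p}\,dx .
\]
By Remark~\ref{immer} and the fractional Sobolev embedding $W^{s,p}(\Omega)\hookrightarrow L^{p^*_s}(\Omega)$ when $sp<N$ (see \cite{DNPV}), letting $L\to\infty$ by Fatou (the right-hand side being finite at each step by induction) gives the self-improving estimate $\|u\|_{L^{\chi q}(\Omega)}\le C(q)\|u\|_{L^{q}(\Omega)}$ with $\chi=p^*_s/p>1$ and $C(q)$ of polynomial growth in $q$; iterating over $q=p\chi^{j}$ and using the convergence of the resulting product of constants yields $u\in L^\infty(\Omega)$ with $\|u\|_{L^\infty(\Omega)}\le C(\lambda,N,s,p,\Omega)\|u\|_{L^p(\Omega)}$. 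When $sp\ge N$ the scheme is only simpler, since then $W^{s,p}(\Omega)$ already embeds into every $L^q(\Omega)$ (into $L^\infty(\Omega)$ if $sp>N$).

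For the second part, set $M:=\|u\|_{L^\infty(\Omega)}$, finite by the first part. Since $u$ is a weak solution of \eqref{probla} and here $g\equiv0$, Theorem~\ref{boundary} gives $\mathscr{N}_{s,p}u(x)=0$ for a.e. $x\in\R^N\setminus\overline\Omega$; moreover, from the finiteness of the double integral defining $X$ (and $\Omega\times(C\Omega)\subseteq\R^{2N}\setminus(C\Omega)^2$), Fubini, and the fact that $|x-y|$ stays bounded away from $0$ and $\infty$ as $y$ runs over $\Omega$, a short Hölder argument shows that the integral defining $\mathscr{N}_{s,p}u(x)$ converges absolutely for a.e. $x\notin\overline\Omega$. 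Suppose the set $E:=\{x\in\R^N\setminus\overline\Omega:\ u(x)>M\}$ had positive measure, and pick $x\in E$ at which $\mathscr{N}_{s,p}u(x)=0$ and this convergence holds. For a.e. $y\in\Omega$ we have $u(x)-u(y)\ge u(x)-M>0$, so the integrand $|u(x)-u(y)|^{p-2}\frac{u(x)-u(y)}{|x-y|^{N+ps}}$ is strictly positive for a.e. $y\in\Omega$, forcing $\mathscr{N}_{s,p}u(x)>0$: a contradiction. Hence $u\le M$ a.e. in $\R^N\setminus\overline\Omega$, and the symmetric argument applied to $\{u<-M\}$ gives $u\ge-M$ a.e. there. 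Thus $\|u\|_{L^\infty(\R^N\setminus\overline\Omega)}\le M$, while trivially $\|u\|_{L^\infty(\R^N)}\ge\|u\|_{L^\infty(\Omega)}=M$; this proves $u\in L^\infty(\R^N)$ and $\|u\|_{L^\infty(\R^N)}=\|u\|_{L^\infty(\Omega)}$.

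The hard part will be the first step: it is the only place where real work is needed, and the point to check is that the Moser iteration designed for the Dirichlet fractional $p$-Laplacian survives in the Neumann space $X$. It does, essentially for two reasons: the extra ``off-diagonal'' part of the energy over $\R^{2N}\setminus(C\Omega)^2$ is nonnegative and may simply be discarded, and all truncated powers of $u$ remain admissible test functions in $X$. The second step, by contrast, is short, but it is the genuinely new ingredient and hinges on combining Theorem~\ref{boundary} with the one-signedness of the kernel.
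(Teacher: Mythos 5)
Your proposal is correct and follows the same two-step architecture as the paper: an interior $L^\infty(\Omega)$ bound obtained by transplanting the Dirichlet-case iteration of \cite{frapa} into $X$ (the two points you flag --- admissibility of truncations as test functions and nonnegativity of the discarded off-diagonal energy --- are exactly what makes this work), followed by the exterior bound from Theorem~\ref{boundary} and the one-signedness of the kernel. The only real divergence is in the first step: the paper runs the De Giorgi scheme of \cite{frapa}, testing with the level-set truncations $w_k=(u-(1-2^{-k}))_+$ and deriving a superlinear recursion for $U_k=\|w_k\|_{L^p(\Omega)}^p$ that forces $U_k\to0$, whereas you run a Moser iteration with the power-type test functions $u\,w_L^{(\beta-1)p}$ and a convergent product of reverse-H\"older constants. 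Both are standard and interchangeable here; the De Giorgi route lets the authors quote \cite{frapa} almost verbatim after a normalization $\|u\|_{L^p(\Omega)}\le\delta$, while your Moser route yields directly the quantitative bound $\|u\|_{L^\infty(\Omega)}\le C\|u\|_{L^p(\Omega)}$ without the scaling step. In the second step the paper writes $u(x)$ as a weighted average of $u|_\Omega$ (after setting aside the constant case to avoid a vanishing denominator), while your contradiction argument on the sets $\{u>M\}$ and $\{u<-M\}$ reaches the same conclusion and sidesteps the division altogether; this is a cosmetic difference.
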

\begin{proof}
First, we prove that $u$ is bounded in $\Omega$, concentrating on the case $ps\leq N$, the case $ps>N$ being trivial by the fractional Morrey-Sobolev embedding. As in \cite{frapa}, we only have to prove that $u_+$ is bounded in $\Omega$,
since both $u_\pm$ are solutions, so we can get a bound for the
negative part in the same way. To do that, it is enough to prove that
\begin{equation}\label{bound}
 \|u\|_{L^\infty(\Omega)}\leq 1 \quad \text{when}\quad \|u\|_{L^p(\Omega)}\leq \delta,
\end{equation}
 where $\delta>0$ is still to be determined. Indeed, we can scale the function verifying 
\eqref{bound}, so there is no restriction in this.

Now, for all $k\geq 0$, we define the function
$$w_k:=(u-(1-2^{-k}))_+, $$see \cite{frapa}, also for the following facts: $w_k\in X$ and
\begin{equation}
\begin{aligned}\label{wk}
w_{k+1}(x) \leq w_k(x) \quad \text{a.e. in} \quad \Omega,\\ 
u(x)<(2^{k+1}-1)w_k(x) \quad \text{for} \quad x\in \lbrace w_{k+1}>0 \rbrace,
\end{aligned}
\end{equation}
and the inclusions 
$$\lbrace w_{k+1}>0 \rbrace\subseteq \lbrace w_k>2^{-(k+1)} \rbrace $$
hold true for every $k\geq 0$. Moreover, for every function $v$
\begin{equation}\label{dis+}
|v(x)-v(y)|^{p-2}(v_+(x)-v_+(y))(v(x)-v(y))\geq |v_+(x)-v_+(y)|^p,
\end{equation}
for all $x,y\in \R^N$.

Now, we want to prove \eqref{bound} using a standard argument relying on estimating the decay of $U_k:=\|w_k\|_{L^p(\Omega)}^p$.
First of all, using \eqref{dis+} with $v=u-(1-2^{-k-1})$ we obtain
\begin{align*}
\|w_{k+1}\|_X^p 
\leq \int_{\R^{2N}\setminus(C\Omega)^2}\frac{J_p(u(x)-u(y))(w_{k+1}(x)-w_{k+1}(y))}{|x-y|^{N+ps}}\,dxdy +U_{k+1}.
\end{align*}
Taking $w_{k+1}$ as a test function in \eqref{probla} and then using \eqref{wk}, we get
\[
\begin{aligned}
\|w_{k+1}\|_X^p &\leq \lambda \int_{\lbrace w_{k+1}>0 \rbrace}|u(x)|^{p-2}u(x)w_{k+1}(x)\,dx +U_{k+1}\notag \\
&\leq (\lambda (2^{k+1}-1)^{p-1}+1)U_k.
\end{aligned}
\]
Using the fractional Sobolev embeddings, as in \cite{frapa}, we get
\[
U_{k+1}\leq c \|w_{k+1}\|^p_X|\lbrace w_{k+1}>0 \rbrace|^\frac{N}{ps},
\]
where $c>0$ depends on $N,p,s$. Proceeding as in \cite{frapa}, we get that $u$ is 
bounded in $\Omega$.

Now, take $x\in \R^N\setminus \overline{\Omega}$. Since $u$ is bounded in $\Omega$, from \eqref{probla} we get
$$u(x)\int_{\Omega}\frac{|u(x)-u(y)|^{p-2}}{|x-y|^{N+ps}}\,dy=\int_{\Omega}\frac{|u(x)-u(y)|^{p-2}u(y)}{|x-y|^{N+ps}}\,dy .$$       
If $u$ is constant, the result is trivial. On the other hand, if $u$ is not constant,  from Theorem \ref{boundary} we have
$$ |u(x)|=
\left|\frac{\displaystyle\int_{\Omega}\frac{|u(x)-u(y)|^{p-2}u(y)}{|x-y|^{N+ps}}\,dy}{\displaystyle\int_{\Omega}\frac{|u(x)-u(y)|^{p-2}}{|x-y|^{N+ps}}\,dy}  \right| \leq \|u\|_{L^\infty(\Omega)},$$
and so $\|u\|_{L^\infty(\R^N\setminus\Omega)}\leq \|u\|_{L^\infty(\Omega)}$, which concludes the proof.
\end{proof}
	
\section{The parabolic equation}

In this section, we consider the problem
\begin{equation}\label{probheat}
\begin{cases}
u_t(x,t)+(-\Delta)^s_pu(x,t)=0 &\quad \text{in } \Omega, \quad t>0 \\
\mathscr{N}_{s,p}u(x,t)=0 &\quad \text{in } \R^N\setminus \overline{\Omega}, \quad t>0 \\
u(x,0)=u_0(x) &\quad \text{in } \Omega. 
\end{cases}
\end{equation}

We show that the solutions of \eqref{probheat} preserve their mass and have energy that decreases in time, as proved in \cite{DPROV} for $p=2$. To do so, we assume that $u$ is a classical solution of \eqref{probheat}, so that \eqref{probheat} holds pointwise. In particular, we can differentiate with respect to time.

\begin{prop}\label{mass}
Let $u$ be a classical solution of \eqref{probheat} such that $u$ is bounded and $|u_t(x,t)|+|(-\Delta)^s_pu(x,t)|\leq K$ 
for all $t>0$ and all $x\in \Omega$. Then, for all $t>0$
$$\int_\Omega u(x,t)\,dx= \int_\Omega u_0(x)\,dx,$$
which means that the total mass is preserved.
\end{prop}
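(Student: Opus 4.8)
The plan is to differentiate the mass $M(t):=\int_\Omega u(x,t)\,dx$ in time and to recognize the time derivative, via the equation, as the integral over $\Omega$ of $(-\Delta)^s_p u$, which then vanishes by the nonlocal divergence theorem (Proposition~\ref{parti1}) together with the homogeneous $p$-Neumann condition.

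First I would justify differentiation under the integral sign. Since $u$ is a classical solution, for each fixed $t>0$ the function $x\mapsto u(x,t)$ is continuous (indeed $C^2$) on $\overline\Omega$, hence integrable on the bounded set $\Omega$, so $M(t)$ is well defined. Fix $t_0>0$; for $t$ in a compact neighborhood of $t_0$ the difference quotients $\frac{u(x,t)-u(x,t_0)}{t-t_0}$ are, by the mean value theorem in $t$, bounded in absolute value by $K$, uniformly in $x\in\Omega$, and they converge pointwise to $u_t(x,t_0)$. Since the constant $K$ is integrable over the finite-measure set $\Omega$, dominated convergence gives
\[
M'(t_0)=\int_\Omega u_t(x,t_0)\,dx .
\]

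Next I would use the equation: pointwise in $\Omega$ we have $u_t(x,t)=-(-\Delta)^s_p u(x,t)$, and by hypothesis $|(-\Delta)^s_p u(\cdot,t)|\le K$ on $\Omega$, so the right-hand side is integrable and
\[
M'(t)=-\int_\Omega (-\Delta)^s_p u(x,t)\,dx .
\]
Applying Proposition~\ref{parti1} to the bounded $C^2$ function $x\mapsto u(x,t)$ yields
\[
\int_\Omega (-\Delta)^s_p u(x,t)\,dx=-\int_{\R^N\setminus\Omega}\mathscr{N}_{s,p}u(x,t)\,dx ,
\]
and since the second line of \eqref{probheat} gives $\mathscr{N}_{s,p}u(\cdot,t)\equiv 0$ in $\R^N\setminus\overline\Omega$, the right-hand side is zero. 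Hence $M'(t)=0$ for all $t>0$.

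Finally, $M$ is constant on $(0,\infty)$; using continuity of $t\mapsto u(x,t)$ up to $t=0$ (classical solution, with the uniform bound on $u_t$ giving equicontinuity in $t$, so $M$ extends continuously to $t=0$) and the initial condition $u(x,0)=u_0(x)$, one gets $M(t)=M(0)=\int_\Omega u_0(x)\,dx$ for every $t>0$, which is the claim. The only genuinely delicate point is the interchange of derivative and integral, which is precisely what the uniform bound $|u_t|\le K$ on the bounded domain $\Omega$ is there to guarantee; everything else is a direct application of Proposition~\ref{parti1} and the boundary condition.
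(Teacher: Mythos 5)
Your proposal is correct and follows essentially the same route as the paper: differentiate the mass under the integral sign (justified by dominated convergence via the uniform bound on $u_t$), use the equation, and conclude with Proposition~\ref{parti1} and the homogeneous $p$-Neumann condition. The extra details you supply on the difference quotients and the continuity at $t=0$ are a fuller justification of the same argument.
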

\begin{proof}
By the dominated convergence theorem and Proposition \ref{parti1}, we have
$$\frac{d}{dt}\int_\Omega u\,dx= \int_\Omega u_t\,dx = -\int_\Omega(-\Delta)^s_pu\,dx
=\int_{\R^N\setminus \Omega} \mathscr{N}_{s,p}u\,dx =0.$$
So, $\int_\Omega u\,dx$ does not depend on $t$, as desired.
\end{proof}

\begin{prop}
Under the assumptions of Proposition $\ref{mass}$, the energy
$$E(t)=\int_{\R^{2N}\setminus (C\Omega)^2} \frac{|u(x,t)-u(y,t)|^p}{|x-y|^{N+ps}}\,dxdy$$
is decreasing in time $t>0$.
\end{prop}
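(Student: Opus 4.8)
The plan is to differentiate $E$ in time, justify interchanging the derivative with the double integral, and then identify the outcome through the integration by parts formula of Proposition~\ref{parti}, applied to the pair $\bigl(u(\cdot,t),u_t(\cdot,t)\bigr)$; this is the nonlocal, nonlinear analogue of the classical monotonicity of the Dirichlet energy along the heat flow.

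For fixed $(x,y)\in\R^{2N}\setminus(C\Omega)^2$ and $t>0$ one has
$$\frac{d}{dt}\,\frac{|u(x,t)-u(y,t)|^p}{|x-y|^{N+ps}}=p\,\frac{J_p\bigl(u(x,t)-u(y,t)\bigr)\bigl(u_t(x,t)-u_t(y,t)\bigr)}{|x-y|^{N+ps}}.$$
To move $d/dt$ under the integral I would produce a time-uniform integrable majorant. Splitting $\R^{2N}\setminus(C\Omega)^2$ according to whether $|x-y|<1$ or $|x-y|\ge1$, on the first region at least one variable lies in $\Omega$ and the other in a fixed bounded set, where the $C^1$ regularity of $u(\cdot,t)$ and $u_t(\cdot,t)$ gives $|u(x,t)-u(y,t)|\le C|x-y|$ and $|u_t(x,t)-u_t(y,t)|\le C|x-y|$, so the integrand is $\le C|x-y|^{p-(N+ps)}$, integrable near the diagonal since $p-ps>0$; on the second region the $L^\infty(\R^N)$ bounds on $u$ and $u_t$ reduce matters to $|\Omega|\int_{\{|z|\ge1\}}|z|^{-(N+ps)}\,dz<\infty$. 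Hence differentiation under the integral sign is legitimate, and then Proposition~\ref{parti} with first function $u(\cdot,t)$ and test function $v=u_t(\cdot,t)$ — both bounded and $C^2$ on $\R^N$, $u$ being a classical solution of \eqref{probheat} — yields
$$E'(t)=2p\left(\int_\Omega u_t\,(-\Delta)^s_p u\,dx+\int_{\R^N\setminus\Omega}u_t\,\mathscr{N}_{s,p}u\,dx\right).$$
Since \eqref{probheat} holds pointwise, $(-\Delta)^s_p u=-u_t$ in $\Omega$ and $\mathscr{N}_{s,p}u=0$ in $\R^N\setminus\overline\Omega$ (and $|\partial\Omega|=0$); therefore the boundary term vanishes and
$$E'(t)=-2p\int_\Omega u_t(x,t)^2\,dx\le0,$$
so $E$ is nonincreasing on $(0,\infty)$, and strictly decreasing unless $u$ is stationary.

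The only delicate point is the passage of the derivative inside the integral, i.e.\ the construction of the time-uniform majorant above; everything else is a one-line substitution into Proposition~\ref{parti} followed by the pointwise use of the equation, in complete analogy with the case $p=2$ treated in \cite{DPROV}. I also remark that the near-diagonal estimate used in the domination demands no extra hypothesis: it merely mirrors the finiteness of $E(t)$ itself.
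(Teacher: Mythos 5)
Your proof is correct and follows essentially the same route as the paper: differentiate $E$, apply the integration by parts formula of Proposition~\ref{parti} with $v=u_t$, and use the equation together with the vanishing of $\mathscr{N}_{s,p}u$ to conclude $E'(t)=-2p\int_\Omega u_t^2\,dx=-2p\int_\Omega|(-\Delta)^s_pu|^2\,dx\le0$. The only difference is that you supply an explicit dominated-convergence justification for differentiating under the integral, which the paper leaves implicit in its standing assumption that $u$ is a classical bounded solution.
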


\begin{proof}
From Proposition \ref{parti}, we have
\begin{align*}
E'(t)&=\frac{d}{dt} \int_{\R^{2N}\setminus (C\Omega)^2} \frac{|u(x,t)-u(y,t)|^p}{|x-y|^{N+ps}}\,dxdy \\
&=p\int_{\R^{2N}\setminus (C\Omega)^2}J_p(u(x,t)-u(y,t))\frac{u_t(x,t)-u_t(y,t)}{|x-y|^{N+ps}}\,dxdy \\
&=2p \int_\Omega u_t(-\Delta)^s_pu\,dx=-2p \int_\Omega \left|(-\Delta)^s_pu \right|^2\,dx \leq 0,
\end{align*}
since $u$ is a solution of \eqref{probheat}, and so the energy is decreasing.  
\end{proof}

\section{Two $p-$Neumann problems with source}

In this section we consider two problems in presence of the $p-$Neumann condition: the first one is the easy case of a given source term, which we study for completeness of the subject, while the second one takes into account a source not satisfying the Ambrosetti-Rabinowitz condition or some of its standard generalizations (see \cite{addendum}).
\smallskip

Let us start with
\begin{equation}\label{probco}
\begin{cases}
(-\Delta)^s_p u + |u|^{p-2}u=f(x) &\quad $ in $ \Omega,
\\
\mathscr{N}_{s,p}u=0  &\quad $ in $ \R^N \setminus \overline{\Omega},
\end{cases}
\end{equation}
with $f\in L^{p'}(\Omega)$.
\begin{dfn}
We say that $u\in X$ is a weak solution of problem \eqref{probco} if 
$$
\frac{1}{2}\int_{\R^{2N}\setminus(C\Omega)^2}\frac{J_p(u(x)-u(y))(v(x)-v(y))}{|x-y|^{N+ps}}\,dxdy + \int_\Omega |u|^{p-2}uv\,dx
=\int_\Omega fv\,dx$$
for every  function $v\in X$.
\end{dfn}

For the sake of simplicity, in this section we replace the usual norm in $X$ with the equivalent one
$$\|u\|^p=\frac{1}{2}\int_{\R^{2N}\setminus(C\Omega)^2}\frac{|u(x)-u(y)|^p}{|x-y|^{N+ps}}\,dxdy + \int_\Omega |u|^p\,dx.$$
Hence, as usual, we can define the functional 
$$\mathcal{J}(u):=\frac{1}{p} \|u\|^p- \int_\Omega fu \,dx,$$
so that every critical point of $\mathcal{J}$ is a weak solution of \eqref{probco}. 

Not surprisingly, we have the following existence result:
\begin{prop}
Let $f\in L^{p'}(\Omega)$,  $s\in(0,1)$ and $p>1$. Then \eqref{probco} admits a unique solution.
\end{prop}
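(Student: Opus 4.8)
The plan is to produce the solution as the unique minimizer of $\mathcal{J}$ via the direct method of the calculus of variations, exploiting that $X$ is reflexive, and then to obtain uniqueness from the strict monotonicity of the operator on the left-hand side of \eqref{probco} (equivalently, from the strict convexity of $\mathcal{J}$).

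First I would verify that $\mathcal{J}$ is coercive and bounded below on $X$. By Hölder's inequality and the continuous embedding $X\hookrightarrow L^p(\Omega)$ (Remark \ref{immer}), for every $u\in X$
\[
\begin{aligned}
\mathcal{J}(u)&=\frac1p\|u\|^p-\int_\Omega fu\,dx\\
&\geq\frac1p\|u\|^p-\|f\|_{L^{p'}(\Omega)}\|u\|_{L^p(\Omega)}\\
&\geq\frac1p\|u\|^p-C\|f\|_{L^{p'}(\Omega)}\|u\|,
\end{aligned}
\]
and the right-hand side tends to $+\infty$ as $\|u\|\to\infty$ since $p>1$; in particular $\inf_X\mathcal{J}>-\infty$. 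Next I would check that $\mathcal{J}$ is sequentially weakly lower semicontinuous: the map $u\mapsto\frac1p\|u\|^p$ is convex and continuous, hence sequentially weakly lower semicontinuous, while if $u_n\rightharpoonup u$ in $X$ then, again by Remark \ref{immer}, $u_n\to u$ strongly in $L^p(\Omega)$, so $\int_\Omega fu_n\to\int_\Omega fu$. A minimizing sequence is then bounded by coercivity; extracting a weakly convergent subsequence and passing to the $\liminf$ produces a minimizer $u\in X$, which is a critical point of $\mathcal{J}$ and hence a weak solution of \eqref{probco}.

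For uniqueness, suppose $u_1,u_2\in X$ are both weak solutions. Subtracting the two weak formulations and taking $v=u_1-u_2\in X$ as test function gives
\[
\begin{aligned}
\frac12\int_{\R^{2N}\setminus(C\Omega)^2}&\frac{\big(J_p(u_1(x)-u_1(y))-J_p(u_2(x)-u_2(y))\big)\big((u_1-u_2)(x)-(u_1-u_2)(y)\big)}{|x-y|^{N+ps}}\,dxdy\\
&+\int_\Omega\big(|u_1|^{p-2}u_1-|u_2|^{p-2}u_2\big)(u_1-u_2)\,dx=0 .
\end{aligned}
\]
By the elementary inequality $\big(|a|^{p-2}a-|b|^{p-2}b\big)(a-b)\geq0$ for all $a,b\in\R$, with equality only when $a=b$, both integrands are nonnegative, so both integrals vanish. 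The second one forces $|u_1|^{p-2}u_1=|u_2|^{p-2}u_2$, i.e. $u_1=u_2$ a.e.\ in $\Omega$; the vanishing of the first integral then gives $u_1(x)-u_1(y)=u_2(x)-u_2(y)$ for a.e.\ $(x,y)\in\R^{2N}\setminus(C\Omega)^2$, and choosing $x\in C\Omega$, $y\in\Omega$ (exactly as in the proof that $\|\cdot\|_X$ is a norm) yields $u_1=u_2$ a.e.\ in $\R^N$.

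I do not expect a genuine obstacle here: coercivity and weak lower semicontinuity reduce entirely to the convexity of $t\mapsto t^p$ and to the compact embedding of Remark \ref{immer}, and uniqueness reduces to the scalar monotonicity inequality above. Alternatively, uniqueness follows directly from the strict convexity of $\mathcal{J}$, which is a consequence of the strict convexity of $\|\cdot\|^p$ (in turn a consequence of the uniform convexity of $X$); but the computation above is self-contained and additionally yields the corresponding continuous dependence estimate on $f$.
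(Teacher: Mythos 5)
Your proof is correct and follows essentially the same route as the paper: existence via the direct method applied to the coercive, convex (hence weakly lower semicontinuous) functional $\mathcal{J}$, and uniqueness from convexity. The only difference is cosmetic: where the paper simply invokes strict convexity of $\mathcal{J}$, you spell out the equivalent monotonicity computation with the test function $u_1-u_2$, which has the minor advantage of explicitly yielding $u_1=u_2$ a.e.\ in all of $\R^N$ and not merely in $\Omega$.
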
 

\begin{proof}
First of all, the functional $\mathcal{J}$ is coercive, in fact
$$\mathcal{J}(u)\geq \frac{1}{p} \|u\|^p- C\|u\|\to \infty $$
when $\|u\|\to \infty$.
Moreover, $\mathcal{J}$ is also strictly convex, hence by the Weierstrass Theorem
it has a global minimum, which is a critical point of $\mathcal{J}$. Uniqueness follows by strict convexity.
\end{proof}

Now, we consider the problem
\begin{equation}\label{prob}
\begin{cases}
(-\Delta)^s_p u + |u|^{p-2}u=f(x,u) & $ in $ \Omega,
\\
\mathscr{N}_{s,p}u=0  & $ in $ \R^N \setminus \overline{\Omega},
\end{cases}
\end{equation}
where $f:\Omega\times \R \to \R $ a Carath\'{e}odory function such that $f(x,0)=0$ for almost every $x\in \Omega$.
In addition, we assume the following hypotheses:
\begin{itemize}
\item[$(f_1)$] there exists $a\in L^q(\Omega)$, $a\geq 0$, with $q\in ((p^*_s)',p)$, $c>0$ and $r\in (p,p^*_s)$ such that
$$|f(x,t)|\leq a(x)+c|t|^{r-1} $$
for a.e. $x \in \Omega$ and for all $t \in \R$;
\item[$(f_2)$] denoting $F(x,t)=\int_0^t f(x,\tau)d\tau $, we have 
$$\lim_{t\to \pm \infty}\frac{F(x,t)}{|t|^p}=+\infty $$
uniformly for a.e. $ x\in \Omega$;
\item[$(f_3)$] if $\sigma(x,t)=f(x,t)t-pF(x,t)$, then there exist $\vartheta\geq1$ and $\beta^* \in L^1(\Omega)$, $\beta^*\geq0$, such that
\[
\sigma(x,t_1)\leq \vartheta\sigma(x,t_2)+\beta^*(x)
\]
for a.e. $x\in \Omega$ and all $0\leq t_1 \leq t_2$ or $t_2\leq t_1 \leq 0$;
\item[$(f_4)$] 
$$\lim_{t\to 0} \frac{f(x,t)}{|t|^{p-2}t}=0 $$
uniformly for a.e. $x\in \Omega$.
\end{itemize}
As usual, in $(f_1)$ we have denoted by $p^*_s$ the fractional Sobolev exponent of order $s$, that is
\[
p^*_s=\begin{cases}
\dfrac{pN}{N-ps}& \mbox{ if }ps<N,\\
\infty &\mbox{ if }ps\geq N,
\end{cases}
\]
so that the embedding in $L^q(\Omega)$ of $W^{s,p}(\Omega)$ (and thus of $X$) is compact for every $q<p^*_s$.
 
\begin{oss}
A few comments on $(f_3)$ are mandatory. Such a condition was introduced in \cite{MP} with $\vartheta=1$. However, it is clear that assuming $\vartheta\geq1$ enlarges the set of admissible {\sl positive} (or definitely positive) functions $\sigma$'s considered in \cite{MP} (as it happens for the model case $f(x,t)=|t|^{r-2}r$). On the other hand, if $\sigma$ were negative, admitting $\vartheta<1$ would make the situation more general. However, if $(f_1)-(f_4)$ hold for some $\vartheta>0$, then $\sigma(x,t)> 0$ for a.e. $x\in\Omega$ and all $t$, at least for $|t|$ large, that is there exists $\bar t\geq0$ such that $\sigma(x,t)> 0$ for a.e. $x\in\Omega$ and all $|t|>\bar t$. Indeed, reasoning with $t$ positive, if for every $t>0$ there exists $\tau>t$ such that $\sigma(x,\tau)\leq 0$, we get $\sigma(x,t)\leq \vartheta \sigma(x,\tau)+\beta^\ast(x)\leq \beta^\ast(x)$, that is $f(x,t)t-pF(x,t)\leq \beta^\ast(x)$ for a.e. $x\in \Omega$ and all $t$. As a consequence, $(F(t)t^{-p})'\leq \beta^\ast(x)t^{-p-1}$, and so
\[
\frac{F(s)}{s^p}-\frac{F(t)}{t^p}\leq \frac{\beta^\ast(x)}{-p}\left(\frac{1}{s^p}-\frac{1}{t^p}\right)
\]
for every $t<s$. Letting $s\to +\infty$, we get a contradiction with $(f_2)$.

As a consequence, in $(f_3)$ the requirement $\vartheta\geq1$ is the most general one.
\end{oss}

Now we are ready to give the definition of a weak solution of our problem.
\begin{dfn}
Let $u \in X$. With the same assumption on $f$ as above, we say that $u$ is a weak solution of (\ref{prob}) if
\[
\begin{aligned}
\frac{1}{2} \int_{\R^{2N}\setminus(C\Omega)^2}\frac{J_p(u(x)-u(y))(v(x)-v(y))}{|x-y|^{N+ps}}\,dxdy
 &+ \int_\Omega |u|^{p-2}uv\,dx \\
& =\int_\Omega f(x,u)v\,dx
 \end{aligned}
 \]
for every $v \in X$.
\end{dfn}
With this definition, we have that any critical point of the functional ${\mathscr E}:X\to \R$ given by
$${\mathscr E}(u)=\frac{1}{p}\|u\|^p- \int_\Omega F(x,u)\,dx $$
is a weak solution of (\ref{prob}).

Our main result is the following
\begin{theorem}\label{th1}
If hypotheses $(f_1)$-$(f_4)$ hold, then problem $(\ref{prob})$ admits two non-trivial
constant sign solutions. More precisely, one solution is strictly positive in $\R^N\setminus \overline \Omega$ and the other one is strictly negative in $\R^N\setminus \overline \Omega$. In addition, if the equation in \eqref{prob} holds pointwise, each solution has strict sign in the whole of $\R^N$.
\end{theorem}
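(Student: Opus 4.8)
The plan is to obtain the positive solution by a mountain pass argument applied to a one‑sided truncation of the energy, and then the negative one by the symmetric construction. Set $f^+(x,t):=f(x,t^+)$ and $F^+(x,t):=\int_0^t f^+(x,\tau)\,d\tau=F(x,t^+)$, and consider
\[
\mathscr{E}^+(u):=\frac1p\|u\|^p-\int_\Omega F^+(x,u)\,dx,\qquad u\in X,
\]
where $\|\cdot\|$ is the equivalent norm fixed in this section; by $(f_1)$ and the compact embeddings $X\hookrightarrow L^r(\Omega)$, $X\hookrightarrow L^{q'}(\Omega)$ (recall $r<p_s^*$ and $q'<p_s^*$), one checks $\mathscr{E}^+\in C^1(X)$ and that every critical point of $\mathscr{E}^+$ is a weak solution of the problem with $f$ replaced by $f^+$. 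Mountain pass geometry is standard: from $(f_4)$ and $(f_1)$ we get, for each $\ve>0$, $F^+(x,t)\le\ve|t|^p+C_\ve|t|^r$, hence $\mathscr{E}^+(u)\ge\frac1p\|u\|^p-\ve C\|u\|^p-C_\ve\|u\|^r$, which is strictly positive for $\ve$ and $\|u\|$ small, so $0$ is a strict local minimum; and for any fixed $w\ge0$, $w\not\equiv0$, the superhomogeneity $(f_2)$ together with Fatou's lemma gives $\mathscr{E}^+(tw)\to-\infty$ as $t\to+\infty$.

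The technical heart is the Cerami condition for $\mathscr{E}^+$ at every level $c$, since the Ambrosetti–Rabinowitz condition is unavailable; I expect this to be the main obstacle. Let $(u_n)$ satisfy $\mathscr{E}^+(u_n)\to c$ and $(1+\|u_n\|)\,(\mathscr{E}^+)'(u_n)\to0$ in $X'$. To prove boundedness, argue by contradiction assuming $\|u_n\|\to\infty$ and set $v_n:=u_n/\|u_n\|$, so that up to a subsequence $v_n\rightharpoonup v$ in $X$ and $v_n\to v$ in $L^p(\Omega)$ and $L^r(\Omega)$. If $v\not\equiv0$, then $|u_n|\to\infty$ a.e. on $\{v\ne0\}$ and $(f_2)$ forces $\int_\Omega F^+(x,u_n)/\|u_n\|^p\to+\infty$, contradicting $\mathscr{E}^+(u_n)/\|u_n\|^p\to0$. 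If $v\equiv0$, choose $t_n\in[0,1]$ with $\mathscr{E}^+(t_nu_n)=\max_{t\in[0,1]}\mathscr{E}^+(tu_n)$; since for every $R>0$ one has $Rv_n\to0$ in $L^r(\Omega)$ and $L^{q'}(\Omega)$, $\mathscr{E}^+(t_nu_n)\ge\mathscr{E}^+(Rv_n)\to R^p/p$, hence $\mathscr{E}^+(t_nu_n)\to+\infty$; in particular $t_n\in(0,1)$ for $n$ large, so $\langle(\mathscr{E}^+)'(t_nu_n),t_nu_n\rangle=0$, and writing $\sigma^+(x,t):=f^+(x,t)t-pF^+(x,t)$ (which vanishes for $t\le0$ and equals $\sigma(x,t)$ for $t>0$),
\[
p\,\mathscr{E}^+(t_nu_n)=\int_\Omega\sigma^+(x,t_nu_n)\,dx\le\vartheta\int_\Omega\sigma^+(x,u_n)\,dx+\|\beta^*\|_{L^1(\Omega)},
\]
where I used $(f_3)$ pointwise on $\{u_n>0\}$ with $0\le t_nu_n\le u_n$ (and that $\sigma^+\equiv0$ on $\{u_n\le0\}$). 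The right–hand side equals $\vartheta\big(p\,\mathscr{E}^+(u_n)-\langle(\mathscr{E}^+)'(u_n),u_n\rangle\big)+\|\beta^*\|_{L^1(\Omega)}\to\vartheta pc+\|\beta^*\|_{L^1(\Omega)}$, contradicting $\mathscr{E}^+(t_nu_n)\to+\infty$. Hence $(u_n)$ is bounded; passing to a weak limit $u$, the compact embeddings give $\int_\Omega f^+(x,u_n)(u_n-u)\to0$ and $\int_\Omega|u_n|^{p-2}u_n(u_n-u)\to0$, so the ``$A$‑part'' of $\langle(\mathscr{E}^+)'(u_n),u_n-u\rangle$ tends to $0$ and the $(S)$ property recalled at the end of Section~2 yields $u_n\to u$ in $X$.

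By the Cerami version of the mountain pass theorem there is a critical point $u_+$ of $\mathscr{E}^+$ at level $c_+>0$, so $u_+\not\equiv0$. Testing the weak equation with $u_+^-:=\max(-u_+,0)\in X$ and using that $f^+(x,u_+)$ vanishes where $u_+\le0$, together with the elementary inequality $J_p\big(a-b\big)\big(a^- {-}b^-\big)\le-|a^- {-}b^-|^p$ (the analogue of \eqref{dis+}), gives $\|u_+^-\|^p\le0$; hence $u_+\ge0$ a.e. in $\R^N$ and $u_+$ is a genuine weak solution of \eqref{prob}, with $u_+\not\equiv0$ in $\Omega$. Now Theorem~\ref{boundary} gives $\mathscr{N}_{s,p}u_+=0$ a.e. in $\R^N\setminus\overline\Omega$, i.e. for a.e.\ $x\notin\overline\Omega$ the value $u_+(x)$ is the unique zero of the strictly increasing map $t\mapsto\int_\Omega|t-u_+(y)|^{p-2}(t-u_+(y))|x-y|^{-N-ps}\,dy$ (the kernel being bounded on $\Omega$ for $x\notin\overline\Omega$); since at $t=0$ this map equals $-\int_\Omega u_+(y)^{p-1}|x-y|^{-N-ps}\,dy<0$, its zero is positive, so $u_+>0$ in $\R^N\setminus\overline\Omega$ (and if $u_+$ happened to be constant, that constant is positive, so the conclusion is trivial). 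Finally, if the equation holds pointwise and $u_+$ is non‑constant, the assumption $u_+(x_0)=0$ at some $x_0\in\Omega$ would give $(-\Delta)^s_pu_+(x_0)=-\int_{\R^N}u_+(y)^{p-1}|x_0-y|^{-N-ps}\,dy<0$, whereas the equation forces $(-\Delta)^s_pu_+(x_0)=f(x_0,0)-0=0$; hence $u_+>0$ in $\Omega$, i.e.\ in all of $\R^N$. Replacing $f^+$ by $f^-(x,t):=f(x,-t^-)$ and repeating verbatim produces the second solution $u_-$, negative a.e.\ in $\R^N$, strictly negative outside $\overline\Omega$, and strictly negative everywhere in the pointwise case; the two solutions are distinct because they have opposite strict sign.
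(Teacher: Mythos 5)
Your overall strategy coincides with the paper's: truncate the nonlinearity (your $F^+(x,t)=F(x,t^+)$ gives exactly the functional ${\mathscr E}_+$ of the paper), verify the Cerami condition using $(f_3)$ through the auxiliary maxima $t_n$, run the mountain pass, kill the negative part of the critical point with the inequality $J_p(a-b)(a^--b^-)\le-|a^--b^-|^p$ from \eqref{disug}, and read the strict sign off the Neumann condition (respectively off the pointwise equation). Most steps are correct, and your treatment of the case $v\equiv0$ is in fact slightly cleaner than the paper's, since you use the identity $p\,\mathscr{E}^+(t_nu_n)=\int_\Omega\sigma^+(x,t_nu_n)\,dx$ and $\int_\Omega\sigma^+(x,u_n)\,dx=p\,\mathscr{E}^+(u_n)-\langle(\mathscr{E}^+)'(u_n),u_n\rangle\to pc$ directly, instead of assembling the bound \eqref{m3} from separate inequalities.

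There is, however, one genuine gap in your Cerami verification. In the case $v\not\equiv0$ you invoke $(f_2)$ to conclude $\int_\Omega F^+(x,u_n)/\|u_n\|^p\to+\infty$ from $|u_n|\to\infty$ a.e.\ on $\{v\ne0\}$. But $F^+(x,t)=F(x,t^+)$ vanishes identically for $t\le0$, so $(f_2)$ only yields blow-up of $F^+(x,u_n(x))/|u_n(x)|^p$ where $u_n\to+\infty$, i.e.\ on $\{v>0\}$; if $v\le0$ a.e.\ with $v<0$ on a set of positive measure, your argument produces no contradiction. The missing step is precisely the paper's first move (and the very inequality you use later to show $u_+^-\equiv0$): test $(\mathscr{E}^+)'(u_n)$ against $u_n^-$, observe that $f^+(x,u_n)u_n^-=0$, and use \eqref{disug} to obtain $\|u_n^-\|^p\le\ve_n\to0$; this forces $v\ge0$, after which your case analysis goes through. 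The paper avoids the issue entirely by normalizing $y_n=u_n^+/\|u_n^+\|\ge0$ after this preliminary step. (A second, minor, omission you share with the paper: the Fatou step requires the uniform integrable lower bound $F^+(x,t)\ge-\beta(x)$ with $\beta\in L^1(\Omega)$, which follows from $(f_1)$ together with $(f_2)$.)
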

First, we introduce the functionals
$${\mathscr E}_\pm(u)=\frac{1}{p}\|u\|^p-\int_\Omega F(x,u^\pm)\,dx , $$
where $u^+$ and $u^-$ are the classical positive part and negative part of $u$. 
We want to prove that both ${\mathscr E}_\pm$ satisfies the Cerami condition, (C) for short, which states that any sequence
$(u_n)_n$ in $X$ such that $({\mathscr E}_\pm(u_n))_n$ is bounded and $(1+\|u_n\|){\mathscr E}'_\pm(u_n)\to 0$ as $n\to \infty$ 
admits a convergent subsequence.

We will also use the following inequality:
\begin{equation}\label{disug}
|x^--y^-|^p \leq |x-y|^{p-2}(x-y)(y^--x^-),
\end{equation}
for any $x,y\in \R$.

\begin{prop}\label{C}
Under the assumptions of Theorem $\ref{th1}$, ${\mathscr E}_\pm$ satisfies the $(C)$ condition.
\end{prop}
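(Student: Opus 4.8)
The plan is to prove that $\mathscr{E}_\pm$ satisfies the Cerami condition in two stages: first boundedness of a Cerami sequence, then extraction of a strongly convergent subsequence via the $(S)$ property of $A$. I will write the argument for $\mathscr{E}_+$; the case of $\mathscr{E}_-$ is identical upon replacing $u^+$ by $u^-$ and using \eqref{disug} instead of its positive-part analogue. Let $(u_n)_n\subset X$ be such that $|\mathscr{E}_+(u_n)|\le M$ and $(1+\|u_n\|)\mathscr{E}'_+(u_n)\to 0$ in $X'$. Testing $\mathscr{E}'_+(u_n)$ with $u_n$ gives $\|u_n\|^p-\int_\Omega f(x,u_n^+)u_n\,dx=o(1)$; note $f(x,u_n^+)u_n=f(x,u_n^+)u_n^+$ because $f(x,0)=0$, so in fact $\langle\mathscr{E}'_+(u_n),u_n\rangle$ only ``sees'' $u_n^+$ together with the full norm term. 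Combining $p\mathscr{E}_+(u_n)$ with $\langle\mathscr{E}'_+(u_n),u_n\rangle$ yields
\[
\int_\Omega \big(f(x,u_n^+)u_n^+-pF(x,u_n^+)\big)\,dx=\int_\Omega \sigma(x,u_n^+)\,dx = o(1)+pM,
\]
so $\int_\Omega\sigma(x,u_n^+)\,dx$ is bounded.

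\textbf{Boundedness.} Suppose, for contradiction, that $\|u_n\|\to\infty$, and set $v_n=u_n/\|u_n\|$, so $\|v_n\|=1$; up to a subsequence $v_n\rightharpoonup v$ in $X$, $v_n\to v$ in $L^p(\Omega)$ and a.e. in $\Omega$, by Remark~\ref{immer}. From $\mathscr{E}_+(u_n)\le M$ we get
\[
\int_\Omega \frac{F(x,u_n^+)}{\|u_n\|^p}\,dx \ge \frac{1}{p} - \frac{M}{\|u_n\|^p}\to \frac1p.
\]
If $v\equiv 0$, one runs the standard contradiction argument: using $(f_4)$ and $(f_1)$ one shows $F(x,t)\le \tfrac{\varepsilon}{p}|t|^p+C_\varepsilon|t|^r$, and then rescaling $u_n$ (replacing $u_n$ by $t_nu_n$ with $t_n\in[0,1]$ chosen so that $\mathscr{E}_+(t_nu_n)=\max_{t\in[0,1]}\mathscr{E}_+(tu_n)$) leads to $\langle\mathscr{E}'_+(t_nu_n),t_nu_n\rangle=0$ and hence to $\int_\Omega\sigma(x,(t_nu_n)^+)\,dx\to+\infty$; on the other hand, by $(f_3)$ applied with $t_1=(t_nu_n)^+(x)\le u_n^+(x)=t_2$ (on the set where $u_n\ge0$) we get $\int_\Omega\sigma(x,(t_nu_n)^+)\,dx\le\vartheta\int_\Omega\sigma(x,u_n^+)\,dx+\|\beta^*\|_{L^1(\Omega)}$, which is bounded — a contradiction. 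If $v\not\equiv0$, then on the set $\{v>0\}$ (which has positive measure since $v\ge 0$ as a.e.\ limit of the dominating part) we have $u_n^+(x)\to+\infty$, so by $(f_2)$ and Fatou's lemma $\int_\Omega F(x,u_n^+)/\|u_n\|^p\,dx\to+\infty$, contradicting $\mathscr{E}_+(u_n)\ge -M$ rewritten as $\int_\Omega F(x,u_n^+)/\|u_n\|^p\,dx\le \tfrac1p+\tfrac{M}{\|u_n\|^p}$. Hence $(u_n)_n$ is bounded.

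\textbf{Strong convergence.} Since $(u_n)_n$ is bounded, up to a subsequence $u_n\rightharpoonup u$ in $X$ and $u_n\to u$ in $L^r(\Omega)$ for $r<p^*_s$ and a.e.\ in $\Omega$ (Remark~\ref{immer}). Writing out $\langle\mathscr{E}'_+(u_n),u_n-u\rangle\to0$, the term $\int_\Omega|u_n|^{p-2}u_n(u_n-u)\,dx\to0$ because $(|u_n|^{p-2}u_n)_n$ is bounded in $L^{p'}(\Omega)$ and $u_n\to u$ in $L^p(\Omega)$; similarly, by $(f_1)$ and Hölder, $\int_\Omega f(x,u_n^+)(u_n-u)\,dx\to0$, using the compact embedding and $a\in L^q(\Omega)$ with $q$ in the stated range (which is exactly what makes $|f(x,u_n^+)|$ bounded in a dual Lebesgue space compatible with strong $L^r$-convergence of $u_n-u$). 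Therefore
\[
\int_{\R^{2N}\setminus(C\Omega)^2}\frac{J_p(u_n(x)-u_n(y))\big((u_n-u)(x)-(u_n-u)(y)\big)}{|x-y|^{N+ps}}\,dxdy + \int_\Omega|u_n|^{p-2}u_n(u_n-u)\,dx = \langle A(u_n),u_n-u\rangle
\]
tends to $0$, so by the $(S)$ property of $A$ established at the end of Section~2, $u_n\to u$ strongly in $X$. This proves $(C)$ for $\mathscr{E}_+$, and hence for $\mathscr{E}_\pm$.

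The main obstacle is the boundedness step, precisely because the Ambrosetti--Rabinowitz condition is absent: one cannot directly bound $\|u_n\|$ from the Cerami inequalities. The replacement tool is $(f_3)$ combined with the ``mountain-pass rescaling'' $t_n u_n$, which converts the a priori uncontrolled behaviour of $F$ into the controlled quantity $\int_\Omega\sigma(x,u_n^+)\,dx$; one must be careful that the monotonicity hypothesis in $(f_3)$ is stated for $0\le t_1\le t_2$ (and the symmetric negative range), so it applies to $u_n^+$ and its rescaling only on the set where $u_n\ge0$, but since $\sigma(x,0)=0$ this restriction is harmless. A secondary technical point is verifying that the parameter range $q\in((p^*_s)',p)$ and $r\in(p,p^*_s)$ in $(f_1)$ genuinely yields, via the compact embedding $X\hookrightarrow\hookrightarrow L^r(\Omega)$ recalled in Remark~\ref{immer}, the vanishing of $\int_\Omega f(x,u_n^+)(u_n-u)\,dx$; this is routine Hölder bookkeeping but must be done to justify invoking the $(S)$ property.
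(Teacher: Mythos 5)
Your overall strategy is the paper's: control $\int_\Omega\sigma(x,u_n^+)\,dx$ by combining $p{\mathscr E}_+(u_n)$ with $\langle{\mathscr E}_+'(u_n),u_n\rangle$ (the Cerami weight $(1+\|u_n\|)$ is exactly what makes the latter $o(1)$ without a priori bounds), prove boundedness by contradiction via a dichotomy on the weak limit of the normalized sequence, use Fatou and $(f_2)$ in the nondegenerate case and the maximizing rescaling $t_nu_n$ together with $(f_3)$ in the degenerate one, and conclude with the $(S)$ property. Testing with $u_n$ directly instead of with $u_n^+$ and $u_n^-$ separately to obtain the $\sigma$-bound is a harmless, indeed slightly cleaner, variant, and your final compactness step matches the paper's.

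There is, however, one genuine gap. In the case $v\not\equiv 0$ you assert that $v\ge 0$ (``as a.e.\ limit of the dominating part''), hence that $|\{v>0\}|>0$, which is what lets you send $u_n^+=\|u_n\|v_n^+\to+\infty$ on a set of positive measure and invoke $(f_2)$. Since you normalized the full function $u_n$ rather than its positive part, nothing you have established rules out $v\le 0$ with $v\not\equiv 0$; in that scenario $\{v>0\}$ is null, the Fatou step collapses, and your other case ($v\equiv 0$) does not apply either, so the dichotomy does not cover all possibilities. The paper closes this hole as its very first step: taking $h=u_n^-$ in the Cerami inequality and using \eqref{disug} gives $\|u_n^-\|^p\le\varepsilon_n\to 0$, so the negative parts vanish and the weak limit of $u_n^+/\|u_n^+\|$ is automatically nonnegative. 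Alternatively, keep your normalization but split the dichotomy on $v^+$ rather than on $v$: if $v^+\not\equiv 0$ the Fatou argument runs exactly as you wrote it, while if $v^+\equiv 0$ then $v_n^+\to 0$ in $L^q(\Omega)$ by the compact embedding, which is all the rescaling argument requires. Either repair is short, but as written the boundedness step has a hole.
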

\begin{proof}
We do the proof for ${\mathscr E}_+$, the proof for ${\mathscr E}_-$ being analogous.

Let $(u_n)_n$ in $X$ be such that 
\begin{equation}\label{cer1}
|{\mathscr E}_+(u_n)|\leq M_1
\end{equation}
for some $M_1>0$ and all $n\geq 1$, and 
\begin{equation}\label{cer2}
(1+\|u_n\|){\mathscr E}'(u_n)\to 0
\end{equation}
in $X'$ as $n\to \infty$. From \eqref{cer2} we have
$$|{\mathscr E}_+'(u_n)(h)|\leq \frac{\varepsilon_nh}{1+\|u_n\|} $$
for every $h\in X$ and with $\varepsilon_n \to 0$ as $n \to \infty$, that is 
\begin{equation}\label{vs}
\begin{aligned}
\left|\int_{\R^{2N}\setminus(C\Omega)^2}\right.&\frac{J_p(u_n(x)-u_n(y))(h(x)-h(y))}{|x-y|^{N+ps}}\,dxdy+\int_\Omega |u_n|^{p-2}u_nh\,dx\\
& \left. -\int_\Omega f(x,u_n^+)h\,dx\right|\leq \frac{\varepsilon_nh}{1+\|u_n\|}. 
\end{aligned}
\end{equation}
Taking $h=u_n^-$ in \eqref{vs}, we obtain
$$\left| \int_{\R^{2N}\setminus(C\Omega)^2}\frac{J_p(u_n(x)-u_n(y))(u_n^-(x)-u_n^-(y))}{|x-y|^{N+ps}}\,dxdy
+\int_\Omega |u_n^-|^p dx\right| \leq \varepsilon_n. $$
By \eqref{disug}, we have
\begin{align*}
& \int_{\R^{2N}\setminus(C\Omega)^2}\frac{|u_n^-(x)-u_n^-(y)|^p}{|x-y|^{N+ps}}\,dxdy \\
 &\leq \int_{\R^{2N}\setminus(C\Omega)^2}\frac{J_p(u_n(x)-u_n(y))(u_n^-(x)-u_n^-(y))}{|x-y|^{N+ps}}\,dxdy, 
 \end{align*}
which leads to 
$$ \|u_n^-\|^p \leq  \varepsilon_n. $$
So, we have that
\begin{equation}\label{vs-}
u_n^- \to 0 \quad \text{in } X \quad \text{as } n\to \infty.
\end{equation}
Now, if we take $h=u_n^+$ in \eqref{vs}, we obtain
\begin{align}\label{vs+}
&-\int_{\R^{2N}\setminus(C\Omega)^2}\frac{J_p(u_n(x)-u_n(y))(u_n^+(x)-u_n^+(y))}{|x-y|^{N+ps}}\,dxdy \notag \\
&- \int_\Omega |u_n^+|^p\,dx + \int_\Omega f(x,u_n^+)u_n^+\,dx \leq \varepsilon_n.
\end{align}
From \eqref{cer1} we have
$$\int_{\R^{2N}\setminus(C\Omega)^2}\frac{|u_n(x)-u_n(y)|^p}{|x-y|^{N+ps}}\,dxdy
 +\int_\Omega |u_n|^pdx-p \int_\Omega F(x,u_n^+)\,dx \leq pM_1$$
for $M_1>0$ and all $n\geq 1$, and since
$$\int_{\R^{2N}\setminus(C\Omega)^2}\frac{J_p(u_n(x)-u_n(y))(u_n^-(x)-u_n^-(y))}{|x-y|^{N+ps}}\,dxdy
+\int_\Omega |u_n^-|^p dx\to 0$$
as $n \to \infty$, we get
\begin{align}\label{m2}
&\int_{\R^{2N}\setminus(C\Omega)^2}\frac{J_p(u_n(x)-u_n(y))(u_n^+(x)-u_n^+(y))}{|x-y|^{N+ps}}\,dxdy \notag \\
&+ \int_\Omega |u_n^+|^pdx -p \int_\Omega F(x,u_n^+)\,dx \leq M_2
\end{align}
for some $M_2>0$ and all $n\geq 1$. Adding \eqref{m2} to \eqref{vs+} we obtain
$$ \int_\Omega f(x,u_n^+)u_n^+ \,dx -p \int_\Omega F(x,u_n^+)\,dx \leq M_3 $$
for some $M_3>0$ and all $n\geq 1$, that is
\begin{equation}\label{m3}
\int_\Omega \sigma(x,u_n^+)\,dx \leq M_3.
\end{equation}

Now we want to prove that $(u_n^+)_n$ is bounded in $X$, and to do this we argue by contradiction. Passing to a subsequence
if necessary, we assume that $\|u_n^+\|\to \infty$ as $n\to \infty$. Defining $y_n=u_n^+/\|u_n^+\|$, we can assume
\begin{equation}\label{cdeb}
y_n \rightharpoonup y \quad \text{in } X \quad \text{and } y_n\to y \quad \text{in } L^q(\Omega)
\end{equation}
for every $q\in (p,p^*_s)$ and $y\geq 0$.

First we consider the case $y\neq 0$. We define $Z(y)=\lbrace x\in\Omega : y(x)=0\rbrace$, and so we have 
$\left|\Omega \setminus Z(y)\right|>0$ and $u_n^+\to \infty$ for almost every $x\in \Omega \setminus Z(y)$ as $n\to \infty$.
By hypothesis ($f_2$), we have
$$\frac{F(x,u_n^+(x))}{\|u_n^+\|^p}=\frac{F(x,u_n^+(x))}{u_n^+(x)^p}y_n(x)^p \to \infty $$
for almost every $x\in \Omega \setminus Z(y)$. By Fatou's Lemma, we have
$$\int_\Omega \liminf_{n\to \infty} \frac{F(x,u_n^+(x))}{\|u_n^+\|^p}\,dx \leq 
\liminf_{n\to \infty}\int_\Omega \frac{F(x,u_n^+(x))}{\|u_n^+\|^p}\, dx,  $$
and so 
\begin{equation}\label{fatu}
\int_\Omega \frac{F(x,u_n^+(x))}{\|u_n^+\|^p}\, dx \to \infty
\end{equation}
as $n\to \infty$.

As before, from \eqref{cer1} and \eqref{vs-} we have
$$-\frac{1}{p}\|u_n\|^p +\int_\Omega F(x,u_n^+)\,dx \leq M_4 $$
for some $M_4>0$ and $n\geq 1$. Since $\|u_n\|^p\leq 2^{p-1}(\|u_n^+\|^p+\|u_n^-\|^p)$, we obtain
$$-\frac{2^{p-1}}{p}\|u_n^+\|^p +\int_\Omega F(x,u_n^+)\,dx \leq M_5 $$
for some $M_5>0$, and so
$$-\frac{2^{p-1}}{p} +\int_\Omega \frac{F(x,u_n^+(x))}{\|u_n^+\|^p}\, dx\leq \frac{M_5}{\|u_n^+\|^p}.  $$
Passing to the limit, we have
$$\limsup_{n\to \infty}\int_\Omega \frac{F(x,u_n^+(x))}{\|u_n^+\|^p}\, dx \leq M_6  $$
for some $M_6$, which is in contradiction with \eqref{fatu}, and this concludes the case $y\neq 0$.

Now,we deal with the case $y\equiv 0$. We consider the continuous functions $\gamma_n:[0,1]\to\R$, defined as
$\gamma_n(t):={\mathscr E}_+(tu_n^+)$ with $t\in[0,1]$ and $n\geq 1$. So, we can define $t_n$ such that
\begin{equation}\label{max}
\gamma_n(t_n)=\max_{t\in[0,1]}\gamma_n(t).
\end{equation} 
Now we define $v_n:=(p\lambda)^\frac{1}{p}y_n\in X$ for $\lambda>0$. From \eqref{cdeb}, it follows that $v_n \to 0$ 
in $L^q(\Omega)$ for all $q\in (p,p^*_s)$. Starting from ($f_1$) and performing some integration, we have
$$\int_\Omega F(x,v_n(x))\,dx \leq \int_\Omega a(x)|v_n(x)|\,dx + C\int_\Omega |v_n(x)|^r\,dx, $$
and so 
\begin{equation}\label{to0}
\int_\Omega F(x,v_n(x))\,dx \to 0
\end{equation}
as $n\to \infty$. Since $\|u_n^+\|\to \infty$, there exists $n_0\geq 1$ such that 
$(p\lambda)^\frac{1}{p} /\|u_n^+\| \in(0,1)$ for all $n\geq n_0$. Then, from \eqref{max}, we have
$$\gamma_n(t_n)\geq \gamma_n\left(\frac{(p\lambda)^\frac{1}{p}}{\|u_n^+\|} \right) $$
for all $n\geq n_0$. It follows that
\begin{align*}
{\mathscr E}_+(t_nu_n^+)& \geq {\mathscr E}_+((p\lambda)^\frac{1}{p}y_n)={\mathscr E}_+(v_n) \\
& =\lambda \|y_n\|^p - \int_\Omega F(x,v_n(x))\,dx.
\end{align*} 
From \eqref{to0}, we have 
$${\mathscr E}_+(t_nu_n^+)\geq \lambda + o(1), $$ 
and since $\lambda$ is arbitrary we have
\begin{equation}\label{toinf}
{\mathscr E}_+(t_nu_n^+)\to \infty
\end{equation}  
as $n\to \infty$. Now, $0\leq t_nu_n^+\leq u_n^+$ for all $n\leq 1$, so from ($f_3$) we get
\begin{equation}\label{sigma}
\int_\Omega \sigma(x,t_nu_n^+)\,dx \leq \vartheta \int_\Omega \sigma(x,u_n^+)\,dx + \|\beta^*\|_1
\end{equation}
for all $n\geq 1$. In addition, we have ${\mathscr E}_+(0)=0$, and from \eqref{cer1}, \eqref{vs-} and \eqref{disug},
we have ${\mathscr E}_+(u_n^+)\leq M_7$ for some $M_7>0$. Together with \eqref{toinf}, this implies that $t_n\in (0,1)$ for all
$n\geq n_1\geq n_0$. Since $t_n$ is a maximum point, we also have 
\begin{align*}
0&= t_n\gamma_n'(t_n) \\
&= \int_{\R^{2N}\setminus(C\Omega)^2}\frac{J_p(t_nu_n(x)-t_nu_n(y))(t_nu_n^+(x)-t_nu_n^+(y))}{|x-y|^{N+ps}}\,dxdy  \\
&+\int_\Omega |t_nu_n^+|^p\,dx - \int_{\Omega} f(x,t_nu_n^+(x))t_nu_n^+(x)\,dx,
\end{align*}
and so, from \eqref{disug},
\begin{equation}\label{zero}
\|t_nu_n^+\|^p - \int_{\Omega} f(x,t_nu_n^+(x))t_nu_n^+(x)\,dx \leq 0.
\end{equation}
Adding \eqref{zero} to \eqref{sigma}, we get
\begin{align*}
\|t_nu_n^+\|^p - p \int_\Omega F(x,t_nu_n^+(x))\,dx \leq \vartheta \int_\Omega \sigma(x,u_n^+)\,dx + \|\beta^*\|_1,
\end{align*}
which is 
$$p{\mathscr E}_+(t_nu_n^+)\leq  \vartheta \int_\Omega \sigma(x,u_n^+)\,dx + \|\beta^*\|_1.$$
So, from \eqref{toinf}, we get
\begin{equation}\label{toinf2}
\int_\Omega \sigma(x,u_n^+)\,dx \to \infty
\end{equation}
as $n\to \infty$. Combining \eqref{m3} and \eqref{toinf2} we obtain a contradiction, and so the claim follows.

We have proved that $(u_n^+)_n$ is bounded in $X$, so from \eqref{vs-} we have that $(u_n)_n$ is bounded in $X$.
Hence, we can assume
\begin{equation}\label{cdeb2}
u_n \rightharpoonup u \quad \text{in } X \quad \text{and } u_n\to u \quad \text{in } L^q(\Omega)
\end{equation}
with $q\in (p,p^*_s)$. Taking $h=u_n-u$ in \eqref{vs}, we have
\begin{align}\label{vss}
&\|u_n\|^p -\int_{\R^{2N}\setminus(C\Omega)^2}\frac{J_p(u_n(x)-u_n(y))(u(x)-u(y))}{|x-y|^{N+ps}}\,dxdy \notag \\
&-\int_\Omega |u_n|^{p-2}u_nu \,dx- \int_{\Omega} f(x,u_n^+)(u_n-u)\,dx\leq\varepsilon_n.
\end{align}
From ($f_1$) and \eqref{cdeb2}, we have
$$\int_{\Omega} |f(x,u_n^+(x))(u_n(x)-u(x))|\,dx \to 0  $$
as $n\to \infty$. So, passing to the limit in \eqref{vss}, we get
\begin{align*}
&\|u_n\|^p -\int_{\R^{2N}\setminus(C\Omega)^2}\frac{J_p(u_n(x)-u_n(y))(u(x)-u(y))}{|x-y|^{N+ps}}\,dxdy \\
&-\int_\Omega |u_n|^{p-2}u_nu \,dx\to 0 
\end{align*}
as $n\to \infty$. This implies that $\|u_n\|^p \to \|u\|^p$, and so from the $(S)$ property it follows that $u_n \to u$ in $X$.
This concludes the proof that ${\mathscr E}_+$ satisfies the (C) condition. 
\end{proof}
We can now give the proof of Theorem \ref{th1}.

\begin{proof}[Proof of Theorem \ref{th1}]
We want to apply the Mountain Pass Theorem to ${\mathscr E}_+$. Since ${\mathscr E}_+$ satisfies the (C) condition from Proposition \ref{C},
we only have to verify the geometric conditions.

From ($f_1$) and ($f_4$), for every $\varepsilon>0$ there exists $C_\varepsilon>0$ such that
\begin{equation}\label{delta}
F(x,u)\leq \frac{\varepsilon}{p}|u|^p+ C_\varepsilon|u|^r
\end{equation}
for almost every $x\in \R^N$ and all $u \in \R$. Then, we have
\begin{align*}
{\mathscr E}_+(u)&= \frac{1}{p}\|u\|^p-\int_\Omega F(x,u^+)\,dx \\
&\geq \frac{1}{p}\|u\|^p - \frac{\varepsilon}{p}\|u\|_p^p- C_\varepsilon\|u\|_r^r \\
&\geq \frac{1-\varepsilon C_1}{p}\|u\|^p- C_2\|u\|^r.
\end{align*}
From this, if $\|u\|=\rho$ small enough, we have $\inf_{\|u\|=\rho}{\mathscr E}_+(u)>0$.

Now, we take $u\in X$ with $u>0$ and $t>0$, then
\begin{align*}
{\mathscr E}_+(u) &=\frac{t^p}{p}\|u\|^p - \int_\Omega F(x,tu)\,dx \\
&= \frac{t^p}{p}\|u\|^p - t^p\int_\Omega \frac{F(x,tu)}{(tu)^p}u^p\,dx.
\end{align*}
By Fatou's Lemma we have 
$$\int_\Omega\liminf_{t\to \infty} \frac{F(x,tu)}{(tu)^p}u^p\,dx 
\leq \liminf_{t\to \infty}\int_\Omega \frac{F(x,tu)}{(tu)^p}u^p\,dx, $$
so from ($f_2$) we have
$$ \int_\Omega \frac{F(x,tu)}{(tu)^p}u^p\,dx \to \infty$$
as $n\to \infty$. It follows that 
$${\mathscr E}_+(tu)\to -\infty $$
as $t\to \infty$, and so there exists $e \in X$ such that $\|e\| \geq \rho$ and ${\mathscr E}_+(e)>0$.

Now, we can apply the Mountain Pass Theorem to ${\mathscr E}_+$ and obtain a non-trivial critical point $u$. In particular,
we have
\begin{align*}
0&=\int_{\R^{2N}\setminus(C\Omega)^2}\frac{J_p(u_n(x)-u_n(y))(u^-(x)-u^-(y))}{|x-y|^{N+ps}}\,dxdy\\&
+\int_\Omega |u|^p\,dx -\int_\Omega f(x,u^+)u^-\,dx \\
&=\int_{\R^{2N}\setminus(C\Omega)^2}\frac{J_p(u_n(x)-u_n(y))(u^-(x)-u^-(y))}{|x-y|^{N+ps}}\,dxdy
+\int_\Omega |u|^p\,dx.
\end{align*}
From \eqref{disug}, we get 
$$ 0\geq \|u^-\|^p, $$
and so $u^-\equiv 0$. As a consequence, we have ${\mathscr E}_+(u)={\mathscr E}(u)$, and so $u\geq 0$ is a solution of
\eqref{prob}. 

Suppose that there exists $x_0\in \R^N\setminus \overline \Omega$ such that $u(x_0)=0$. Then, from Theorem \ref{boundary} we would get
\[
\int_\Omega \frac{u^{p-1}(y)}{|x-y|^{N+sp}}dy=0,
\]
so that $u=0$ in $\Omega$ and thus, using $u$ as test function in the equation, $u=0$ in $\R^N$, while $u$ is non-trivial.

Now, assume that the equation i \eqref{prob} holds pointwise and suppose by contradiction that there exists $x\in \Omega$ such that $u(x)=0$. 
From the equation we would get
$$\int_{\R^N}\frac{u(y)^{p-1}}{|x-y|^{N+ps}}\,dy=0. $$
This would imply that $u=0$ a.e. in $\R^N$, which is a contradiction since the solution is non-trivial. It follows that $u>0$ in $\R^N$.  

Arguing in the same way for ${\mathscr E}_-$, we can find a non-trivial negative solution for \eqref{prob}.
\end{proof}
\medskip

\textit{Some open questions}.
\begin{enumerate}
\item {\sl Is any solution of problem \eqref{prob} continuous in $\R^N$?} In the Dirichlet case ``$u=0$ on $\R^N\setminus \Omega$'', this last condition helps significantly in obtaining the desired regularity. In our case,  we believe this result is true, but at the moment we are not able to prove it.
\item {\sl Is it true that any solution of problem \eqref{prob} solves the equation $(-\Delta)_p^su=f(x,u)$ a.e. in $\Omega$?} Of course, if $f$ is continuous and the solution is regular, this would be true.
\end{enumerate}

\section*{Acknowledgments}
The first author is Member of the Gruppo Nazionale per l'Analisi Matematica, la Probabilit\`a  a e le loro Applicazioni (GNAMPA) of the Istituto Nazionale di Alta Matematica (INdAM) ``F. Severi''. He is supported by the MIUR National Research Project {\sl Variational methods, with applications to problems in mathematical physics and geometry} (2015KB9WPT\underline{\ }009) and by the FFABR ``Fondo per il finanziamento delle attivit\`a base di ricerca'' 2017.

The second author is is Member of the Gruppo Nazionale per l'Analisi Matematica, la Probabilit\`a  a e le loro Applicazioni (GNAMPA) of the Istituto Nazionale di Alta Matematica (INdAM) ``F. Severi''.

\end{document}